\newtheorem{theorem}{Theorem}[section]
\newtheorem{corollary}[theorem]{Corollary}
\newtheorem{lemma}[theorem]{Lemma}
\theoremstyle{definition}
\newtheorem{remark}[theorem]{Remark}
\numberwithin{equation}{section}
\newcommand{\N}{\mathbb{N}}
\newcommand{\Z}{\mathbb{Z}}
\renewcommand{\Im}{\mathrm{Im}}
\renewcommand{\Re}{\mathrm{Re}}
\renewcommand{\epsilon}{\varepsilon}
\patchcmd{\section}{\scshape}{\bfseries}{}{}
\renewcommand{\@secnumfont}{\bfseries}
\makeatletter\newcommand{\tpmod}[1]{{\@displayfalse \pmod{#1}}}
\tikzset{
annotatedImage/x/.initial = 0.7,
annotatedImage/y/.initial = 0.7,
annotatedImage/width/.initial = 1,
annotatedImage/.unknown/.code = {
\edef\tikzappend{\noexpand\tikzset{annotatedImage/.append style = {\pgfkeyscurrentname=\pgfkeyscurrentvalue}}}
\tikzappend
},
annotatedImage/.style = {
draw=red, ultra thick, rounded corners, rectangle,
}
}
\newsavebox\annotatedImageBox
\newcommand\AnnotatedImageVal[1]{\pgfkeysvalueof{/tikz/annotatedImage/#1}}
\newcommand\SetUpAnnotatedImage[2]{
\tikzset{annotatedImage/.cd, #1}%
\sbox\annotatedImageBox{\includegraphics[width=\AnnotatedImageVal{width}\textwidth,keepaspectratio]{#2}}%
\pgfmathsetmacro\annotatedHeight{\ht\annotatedImageBox/28.453}
\pgfmathsetmacro\annotatedWidth{\wd\annotatedImageBox/28.453}%
}
\NewDocumentCommand\annotatedImage{ O{} m m}{%
\bgroup
\SetUpAnnotatedImage{#1}{#2}%
\begin{tikzpicture}[xscale=\annotatedWidth, yscale=\annotatedHeight]%
\node[inner sep=0, anchor=south west] (image) at (0,0) {\usebox{\annotatedImageBox}};
\node[annotatedImage] at (\AnnotatedImageVal{x},\AnnotatedImageVal{y}) {#3};
\end{tikzpicture}
\egroup%
}
\newcommand\annotate[1][]{\node[annotatedImage,#1]}
\newenvironment{AnnotatedImage}[2][1]{%
\SetUpAnnotatedImage{#1}{#2}%
\tikzpicture[xscale=\annotatedWidth, yscale=\annotatedHeight]
\node[inner sep=0, anchor=south west] at (0,0) {\usebox{\annotatedImageBox}};
}{\endtikzpicture}
\begin{document}

\title{The Prime Geodesic Theorem and Bounds for Character Sums}

\author{Ikuya Kaneko}
\address{The Division of Physics, Mathematics and Astronomy, California Institute of Technology, 1200 East California Boulevard, Pasadena, CA 91125, USA}
\urladdr{\href{https://sites.google.com/view/ikuyakaneko/}{https://sites.google.com/view/ikuyakaneko/}}
\email{ikuyak@icloud.com}

\thanks{The author acknowledges the support of the Masason Foundation.}

\subjclass[2020]{11L40 (primary); 11F72, 11M26 (secondary)}

\keywords{Prime geodesic theorem, quadratic character sums, subconvexity, Zagier $L$-series, zero density estimates, Kloosterman sums, spectral exponential sums}

\date{\today}

\begin{abstract}
We establish the prime geodesic theorem for the modular surface with exponent $\frac{2}{3}+\varepsilon$, improving upon~the long-standing exponent $\frac{25}{36}+\varepsilon$ of Soundararajan--Young (2013). This was previously known conditionally on the generalised Lindel\"{o}f hypothesis for quadratic Dirichlet $L$-functions. Our argument goes through a well-trodden trail via the automorphic machinery, and refines the techniques of Iwaniec (1984) and Cai (2002) to a maximum extent. A key ingredient is an asymptotic for bilinear forms with a counting function in~Kloosterman sums via hybrid Weyl-strength subconvex bounds for quadratic Dirichlet $L$-functions due to Young (2017), zero density estimates due to Heath-Brown (1995) near the edge of the critical strip, and an asymptotic for averages of Zagier $L$-series due to Balkanova--Frolenkov--Risager (2022). Furthermore, we strengthen our exponent to $\frac{5}{8}+\epsilon$ conditionally on the generalised Lindel\"{o}f hypothesis for quadratic Dirichlet $L$-functions, which breaks the existing barrier.
\end{abstract}

\maketitle
\tableofcontents

\section{Introduction}

\subsection{Brief Retrospection}
The prime geodesic theorem concerns the asymptotic behaviour of the counting function of oriented primitive closed geodesics on hyperbolic manifolds. If the underlying surface $\Gamma \backslash \mathbb{H}$ with $\mathbb{H} \coloneqq \{z = x+iy \in \mathbb{C}: y > 0 \}$ the upper half-plane~comes from a cofinite Fuchsian group $\Gamma \subset \mathrm{PSL}_{2}(\mathbb{R})$, then this problem has sparked a lot of attention of number theorists; see~\cite{Hejhal1976,Hejhal1983,Kuznetsov1978,Randol1977,Sarnak1980,Venkov1990} for classical triumphs.

A closed geodesic $P$ on $\Gamma \backslash \mathbb{H}$ corresponds bijectively to a hyperbolic conjugacy class in $\Gamma$; cf.~\cite{Huber1959}. If $\mathrm{N}(P)$ denotes the norm of this conjugacy class, then the hyperbolic length of $P$ equals $\log\mathrm{N}(P)$. We define an analogue of the von Mangoldt function by $\Lambda_{\Gamma}(P) \coloneqq \log \mathrm{N}(P_{0})$ with $P_{0}$ the primitive closed geodesic underlying $P$, and $\Lambda_{\Gamma}(P) = 0$ otherwise. In conjunction with the context of prime number theory, it is often convenient to handle the Chebyshev-like counting function given by
\begin{equation*}
\Psi_{\Gamma}(x) \coloneqq \sum_{\mathrm{N}(P) \leq x} \Lambda_{\Gamma}(P).
\end{equation*}
Note the resemblance with the prime number theorem. The role of the Riemann zeta function in the proof of the prime number theorem is now played by the Selberg zeta function of $\Gamma$. The analytic properties of the latter follow from the Selberg trace formula, which in turn looks like Weil's explicit formula with the primes being replaced with the pseudoprimes~$\mathrm{N}(P)$.

As an ancestor, Selberg~\cite{Selberg1956,Selberg2014} utilised his trace formula to establish an asymptotic\footnote{Back in the 1950s, Selberg~\cite{Selberg1956,Selberg2014} noticed the existence of such an asymptotic, and his reasoning~is quite standard, as mentioned in~\cite[Page~187]{Iwaniec1984-2}. Nonetheless, it was overlooked by his successors, and~the rigorous proof was first given in print by Huber~\cite{Huber1961,Huber1961-2} as far as the author knows.}
\begin{equation*}
\Psi_{\Gamma}(x) = \sum_{\frac{1}{2} < s_{j} \leq 1} \frac{x^{s_{j}}}{s_{j}}+\mathcal{E}_{\Gamma}(x),
\end{equation*}
where the main term comes from the small eigenvalues $\lambda_{j} = s_{j}(1-s_{j}) < \frac{1}{4}$ of the Laplacian on $\Gamma \backslash \mathbb{H}$, and $\mathcal{E}_{\Gamma}(x)$ serves as an error term. It is known that $\mathcal{E}_{\Gamma}(x) \ll_{\Gamma,\epsilon} x^{\frac{3}{4}+\epsilon}$ via the explicit formula in~\cite[Lemma~1]{Iwaniec1984} or~\cite[Lemma~2.3]{KanekoKoyama2022}. This barrier is often called the trivial bound. Given an analogue of the Riemann hypothesis for Selberg zeta functions apart from a finite number of the exceptional zeros, the folklore conjecture states that $\mathcal{E}_{\Gamma}(x) \ll_{\Gamma,\epsilon} x^{\frac{1}{2}+\epsilon}$. This remains open due to the abundance of Laplace eigenvalues by Weyl's law; see~\cite{Iwaniec1984,Iwaniec1984-2,BalkanovaFrolenkovRisager2022} for heuristic evidence based on the twisted Linnik--Selberg conjecture. Note that the arithmetic case entails a relation between $\Psi_{\Gamma}(x)$ and averages of class numbers of real quadratic fields ordered according to the size of the regulator; cf.~\cite[Corollary~1.5]{Sarnak1982}. By the class number formula, the estimation of $\mathcal{E}_{\Gamma}(x)$ embraces data of real quadratic fields.

For $\Gamma = \mathrm{PSL}_{2}(\Z)$, the pioneering work of Iwaniec~\cite[Theorem~2]{Iwaniec1984} breaks the barrier, obtaining $\mathcal{E}_{\Gamma}(x) \ll_{\epsilon} x^{\frac{35}{48}+\epsilon}$. Iwaniec~\cite[Section~7]{Iwaniec1984-2} also observed that the exponent $\frac{2}{3}+\epsilon$ would follow from the generalised Lindel\"{o}f hypothesis for quadratic Dirichlet $L$-functions or for certain Rankin--Selberg $L$-functions. Shortly thereafter, Luo--Sarnak~\cite[Theorem~1.4]{LuoSarnak1995} strengthened the ideas of Iwaniec~\cite{Iwaniec1984} building on a result of Hoffstein--Lockhart~\cite[Corollary~0.3]{HoffsteinLockhart1994}, and proved the exponent $\frac{7}{10}+\epsilon$. As a further refinement, Cai~\cite[Theorem]{Cai2002} synthesised their techniques to deduce the exponent $\frac{71}{102}+\epsilon$. A key input in all these results is a nontrivial bound for a spectral exponential sum via the Kuznetsov formula. On the other hand, the subsequent work of Soundararajan--Young~\cite[Theorem~1.1]{SoundararajanYoung2013} demonstrates that
\begin{equation}\label{eq:Sound-Young}
\mathcal{E}_{\Gamma}(x) \ll_{\epsilon} x^{\frac{2}{3}+\frac{\vartheta}{6}+\epsilon},
\end{equation}
where $\vartheta$ is a subconvex exponent for quadratic Dirichlet $L$-functions in the conductor aspect. Here the current record $\vartheta = \frac{1}{6}$ of Conrey--Iwaniec~\cite[Corollary~1.5]{ConreyIwaniec2000} yields the best known exponent $\frac{25}{36}+\epsilon$. The proof of~\eqref{eq:Sound-Young} is predicated on the Kuznetsov--Bykovski\u{\i} formula~\cite{Bykovskii1994,Kuznetsov1978,SoundararajanYoung2013}. For reference, Table~\ref{table} below compiles a chronology of the known exponents and the numerical values thereof; see~\cite[Corollary~1.2]{LuoRudnickSarnak1995} when $\Gamma$ is a congruence subgroup, and~\cite[Theorem]{Koyama1998} when $\Gamma$ is cocompact and arises from a quaternion algebra.

The reader may familiarise themselves with the prime geodesic theorem and its background via a cursory perusal of the literature~\cite{BalogBiroCherubiniLaaksonen2022,BalogBiroHarcosMaga2019,BalkanovaChatzakosCherubiniFrolenkovLaaksonen2019,BalkanovaFrolenkov2019,BalkanovaFrolenkov2020,BalkanovaFrolenkovRisager2022,ChatzakosCherubiniLaaksonen2022,CherubiniGuerreiro2018,ChatzakosHarcosKaneko2023,CherubiniWuZabradi2022,DeverMilicevic2023,Kaneko2020,Kaneko2022-2,Kaneko2023,Kaneko2024-2,KanekoKoyama2022,PetridisRisager2017}.

\begin{center}
\captionof{table}{A chronology of the previous exponents}
	\begin{tabular}{lll}
	\toprule
	Year & Author(s) & Exponent \\ \midrule \midrule
	$1950$s & Selberg~\cite{Selberg1956,Selberg2014} & $\frac{3}{4} = 0.75$ \\ \midrule
	$1984$ & Iwaniec~\cite{Iwaniec1984} & $\frac{35}{48} = 0.72916 \cdots$ \\ \midrule
	$1995$ & Luo--Sarnak~\cite{LuoSarnak1995} & $\frac{7}{10} = 0.7$ \\ \midrule
	$2002$ & Cai~\cite{Cai2002} & $\frac{71}{102} = 0.69607 \cdots$ \\ \midrule
	$2013$ & Soundararajan--Young~\cite{SoundararajanYoung2013} & $\frac{25}{36} = 0.69444 \cdots$ \\ \midrule
	$1984$* & Iwaniec~\cite{Iwaniec1984,Iwaniec1984-2} & $\frac{2}{3} = 0.66666 \cdots$ \\ \bottomrule
	\multicolumn{3}{l}{\footnotesize{*An asterisked result assumes that $\vartheta = 0$.}}
	\end{tabular}
	\label{table}
\end{center}

\subsection{Statement of Results}
Before describing our main results, we clarify a bit of notation. Throughout the rest of the paper, the letter $\Gamma$ is reserved to signify the full modular group $\mathrm{PSL}_{2}(\Z)$ unless otherwise specified. We make use of the Vinogradov asymptotic notation $\ll$ and $\gg$ and the big $O$ notation $O(\cdot)$ interchangeably, as needed. Dependence on a parameter is always indicated by a subscript. Furthermore, the letter $\epsilon$ represents an arbitrarily small positive quantity, not necessarily the same at each occurrence.

We are now ready to state our results.
\begin{theorem}\label{main}
For a fundamental discriminant $D$, let $\chi_{D} = (\frac{D}{\cdot})$ be the primitive quadratic character modulo $|D|$, and suppose that the hybrid subconvex bound\footnote{We use a different font $\theta$ for distinction from the subconvex exponent $\vartheta$ in~\eqref{eq:Sound-Young} in the conductor aspect. Henceforth, one may replace $\vartheta$ with $\theta$ as needed, but not vice versa.}
\begin{equation}\label{eq:hybrid}
L \left(\frac{1}{2}+it, \chi_{D} \right) \ll_{\epsilon} (|D|(1+|t|))^{\theta+\epsilon}
\end{equation}
holds for some $0 \leq \theta \leq \frac{1}{6}$. Then we have for any $\epsilon > 0$ that
\begin{equation}\label{eq:main}
\mathcal{E}_{\Gamma}(x) \ll_{\epsilon} x^{\frac{5}{8}+\frac{\theta}{4}+\epsilon}.
\end{equation}
\end{theorem}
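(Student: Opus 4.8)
The plan is to run the automorphic approach of Iwaniec and Cai combined with the Kuznetsov--Bykovski\u{\i} mechanism of Soundararajan--Young, the new input being a sharper asymptotic for the bilinear Kloosterman-sum forms that arise. The first step is the reduction of $\mathcal{E}_\Gamma(x)$ to a spectral exponential sum: by the explicit formula for the Chebyshev-type counting function (\cite[Lemma~1]{Iwaniec1984}; see also \cite[Lemma~2.3]{KanekoKoyama2022}), one has for $2 \le T \le \sqrt{x}$ that
\begin{equation*}
\mathcal{E}_\Gamma(x) = \sum_{|t_j| \le T} \frac{x^{1/2+it_j}}{\tfrac{1}{2} + it_j} + O\!\left(\frac{x \log^{2} x}{T}\right),
\end{equation*}
where $\tfrac{1}{4} + t_j^{2}$ runs over the Laplace eigenvalues on $\Gamma \backslash \mathbb{H}$. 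By partial summation it then suffices to bound the spectral exponential sum $\mathcal{S}(U,x) := \sum_{0 < t_j \le U} x^{it_j}$ uniformly for $U \le T$, and afterwards to choose $T$ --- of size about $x^{3/8 - \theta/4}$ --- so as to balance the truncation error $x/T$ against the contribution of $\mathcal{S}$.

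\textbf{Passage to quadratic $L$-values.} To estimate $\mathcal{S}(U,x)$ I would, following Soundararajan--Young, pass through the Kuznetsov and Kuznetsov--Bykovski\u{\i} formulae \cite{Bykovskii1994,SoundararajanYoung2013}: the former turns a smoothed variant of $\mathcal{S}(U,x)$ into an explicit main term plus a sum of Kloosterman sums of the shape $\sum_{n} \beta(n) \sum_{c \ge 1} c^{-1} S(n,n;c)\, \phi\!\left(4\pi n/c\right)$ with smooth weights and an explicit oscillatory kernel $\phi$ depending on $U$ and $x$; the latter rewrites the inner sum over the moduli $c$ in terms of the Zagier $L$-series $\mathscr{L}_{n}(s)$. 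Factoring $\mathscr{L}_{n}$ through $L(s,\chi_{D})$ for the fundamental discriminant $D$ with $n = D f^{2}$, one thus recasts $\mathcal{S}(U,x)$ as a bilinear form over fundamental discriminants, weighted by a counting function inherited from the range of $c$, which is to be evaluated at or near the central point $s = \tfrac{1}{2}$.

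\textbf{The bilinear estimate and the three inputs.} The heart of the matter is an asymptotic evaluation of this bilinear form. Shifting contours, one separates it into three pieces. The polar contribution yields a main term, which must be extracted with a power-saving error by means of the asymptotic of Balkanova--Frolenkov--Risager \cite{BalkanovaFrolenkovRisager2022} for averages of Zagier $L$-series, so that it cancels against the main term dictated by Weyl's law rather than corrupting $\mathcal{E}_\Gamma$. The critical-line contribution is controlled by the hybrid subconvex bound \eqref{eq:hybrid} summed over the family of quadratic characters; this is the sole place at which the exponent $\theta$ enters the final estimate. The remaining contribution, coming from $\Re s$ close to $1$ where convexity is too wasteful, is handled by the Heath-Brown zero-density estimate for the family of real-character $L$-functions near the edge of the critical strip. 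Assembling the three, one obtains a bound for $\mathcal{S}(U,x)$ that improves on the corresponding estimate of Soundararajan--Young --- both in the diagonal (Weil-type) term, of size $U^{3/2}$ there, and in the $x$-dependence of the off-diagonal, subconvexity-controlled term --- and reinserting it into the explicit formula and optimising in $T$ yields \eqref{eq:main}.

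\textbf{The main obstacle.} The difficulty concentrates entirely in the bilinear estimate: one must make the passage through Bykovski\u{\i}'s formula effective and uniform enough that the three ingredients interlock with mutually compatible error terms. In particular, the subconvexity bound has to be applied \emph{hybridly}, in the conductor and the $t$-aspects simultaneously --- conductor-aspect subconvexity alone no longer suffices once $T$ is pushed up to size $x^{3/8}$, which is precisely why the hypothesis is stated in the hybrid form \eqref{eq:hybrid}; the Balkanova--Frolenkov--Risager main term must be carried with enough saving to survive the outer summations over $n$ and over the moduli $c$; and the subconvexity regime must be spliced to the zero-density regime across the transition near $\Re s = 1$ without conceding any power of $x$. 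None of these steps involves a genuinely new mechanism --- each is a pushing of the Iwaniec--Cai circle of ideas to its limit --- but effecting all of them at once is what the theorem requires.
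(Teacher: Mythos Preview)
Your plan has a structural gap that blocks anything below $x^{2/3}$. You start from the standard Iwaniec explicit formula
\[
\mathcal{E}_\Gamma(x)=\sum_{|t_j|\le T}\frac{x^{1/2+it_j}}{\tfrac12+it_j}+O\!\left(\frac{x\log^2 x}{T}\right),
\]
and propose $T\approx x^{3/8-\theta/4}$. But no matter how strong your bilinear estimate is, the spectral exponential sum bound will retain a term of size $T^{3/2}$ (the ``diagonal'' you mention; the paper's Lemma~\ref{lem:extra} still has it). After partial summation this contributes $x^{1/2}T^{1/2}$, and balancing against $x/T$ forces $T=x^{1/3}$ and error $x^{2/3}$. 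With your choice $T=x^{3/8-\theta/4}$ one gets $x^{1/2}T^{1/2}=x^{11/16-\theta/8}$, which exceeds $x^{5/8+\theta/4}$ whenever $\theta<\tfrac16$. So the proposal recovers Corollary~\ref{cor:main} at best, not Theorem~\ref{main}. Your claim of ``improving on the diagonal (Weil-type) term of size $U^{3/2}$'' is exactly what would be needed, but this is not achieved in the paper (nor is it known).

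The paper sidesteps this barrier by \emph{not} using the Iwaniec explicit formula alone. It invokes the Soundararajan--Young smoothed formula~\cite[Equation~(17)]{SoundararajanYoung2013}, which replaces the truncation error $x/T$ by $Y^{1/2}x^{1/4+\theta/2}$; this is the place where the hybrid subconvex $\theta$ enters the final exponent, not where you locate it. Two further points where your outline diverges from what actually works: (i)~the zero density input is used to bound the \emph{cardinality} of an exceptional set $\mathcal{M}=\{a:L(s,\chi_{a^2-4})\text{ has a zero in a box}\}$ in the $\rho(c,a)$-sum, with subconvexity applied pointwise only on $\mathcal{M}$---not as a near-edge contour piece after a shift; and (ii)~the Balkanova--Frolenkov--Risager input is used \emph{self-referentially}: their equivalence (Lemma~\ref{lem:BFR}) converts an assumed bound $\mathcal{E}_\Gamma(x)\ll x^{\delta+\epsilon}$ into a bilinear estimate with error $B^{2(2\delta-1)}$, which feeds through the whole machine and returns a new bound on $\mathcal{E}_\Gamma$; solving the resulting fixed-point equation $\delta=\max(\tfrac12+\text{exp},\,2\delta-\tfrac12-\text{exp})$ is what produces $\delta=\tfrac58+\tfrac{\theta}{4}$. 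Your proposal treats BFR only as supplying a main term, missing this bootstrap.
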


\begin{figure}
\centering
\begin{AnnotatedImage}[width=0.8]{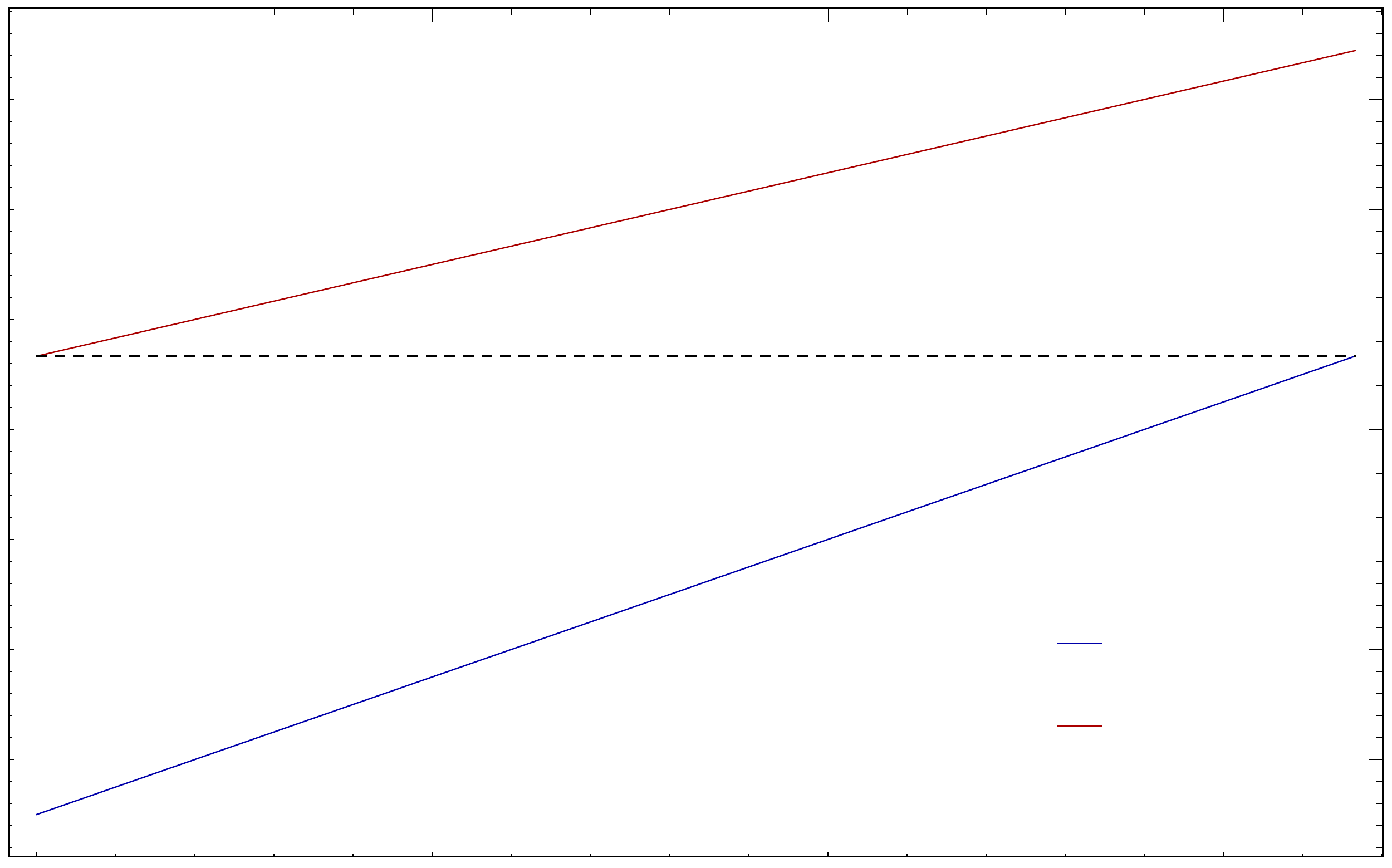}
	\annotate [draw=none,font=\normalsize] at (-0.015,0.5898){\color{black}{$\frac{2}{3}$}};
	\annotate [draw=none,font=\normalsize] at (-0.015,0.0613){\color{black}{$\frac{5}{8}$}};
	\annotate [draw=none,font=\normalsize] at (1.015,0.9423){\color{black}{$\frac{25}{36}$}};
	\annotate [draw=none,font=\normalsize] at (1.015,0.5898){\color{black}{$\frac{2}{3}$}};
	\annotate [draw=none,font=\normalsize] at (0.027,-0.03){\color{black}{$0$}};
	\annotate [draw=none,font=\normalsize] at (0.97435,-0.03){\color{black}{$\frac{1}{6}$}};
	\annotate [draw=none,font=\normalsize] at (0.8412,0.2594){\color{black}{$\frac{5}{8}+\frac{\theta}{4}$}};
	\annotate [draw=none,font=\normalsize] at (0.8412,0.1645){\color{black}{$\frac{2}{3}+\frac{\theta}{6}$}};
\end{AnnotatedImage}
\caption{A comparison of~\eqref{eq:main} and~\eqref{eq:Sound-Young} as $\theta \in [0, \frac{1}{6}]$ varies}
\label{fig}
\end{figure}
Figure~\ref{fig} compares (up to $\epsilon$) the quality of the bounds~\eqref{eq:main} and~\eqref{eq:Sound-Young} as $\theta \in [0, \frac{1}{6}]$ varies, and visualises to what extent Theorem~\ref{main} attains an improvement. Note that the exponent in~\eqref{eq:main} shrinks monotonically as $\theta$ approaches $0$. Furthermore, strictly speaking, the upper bound~$\theta \leq \frac{1}{6}$ is essential to our argument since it does not work when $\frac{1}{6} < \theta < \frac{1}{4}$.

The numerical record of $\theta$ at the current state of knowledge is $\theta = \frac{1}{6} = 0.16666 \cdots$ thanks to the tour de force work of Young~\cite[Page~1547]{Young2017}; cf.~\cite{PetrowYoung2020,PetrowYoung2023} for the culmination of several decades of research in this realm. It is vital that the non-hybrid subconvex exponent $\vartheta = \frac{1}{6}$ of Conrey--Iwaniec~\cite[Corollary~1.5]{ConreyIwaniec2000} is in fact insufficient for our purposes.
\begin{corollary}\label{cor:main}
Keep the notation as above. Then we have for any $\epsilon > 0$ that
\begin{equation*}
\mathcal{E}_{\Gamma}(x) \ll_{\epsilon} x^{\frac{2}{3}+\epsilon}.
\end{equation*}
\end{corollary}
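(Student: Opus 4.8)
The plan is to deduce Corollary \ref{cor:main} as an immediate specialization of Theorem \ref{main}. The only input required is an admissible value of $\theta$ in the hybrid subconvex bound \eqref{eq:hybrid}, and the strongest such value currently available is $\theta = \frac{1}{6}$, furnished by the Weyl-strength hybrid bound of Young \cite[Page~1547]{Young2017}. It is essential here to invoke this genuinely hybrid estimate — uniform simultaneously in the conductor and the $t$-aspect — rather than the non-hybrid subconvex exponent $\vartheta = \frac{1}{6}$ of Conrey--Iwaniec \cite[Corollary~1.5]{ConreyIwaniec2000}, which on its own does not suffice for the argument underlying Theorem \ref{main}. Since $\frac{1}{6}$ sits precisely at the right endpoint of the admissible interval $[0,\frac{1}{6}]$ in the hypothesis of Theorem \ref{main}, one may feed it directly into \eqref{eq:main}.

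It then remains only to carry out the arithmetic: with $\theta = \frac{1}{6}$ one has
\[
\frac{5}{8}+\frac{\theta}{4} \;=\; \frac{15}{24}+\frac{1}{24} \;=\; \frac{16}{24} \;=\; \frac{2}{3},
\]
whence $\mathcal{E}_{\Gamma}(x) \ll_{\epsilon} x^{\frac{2}{3}+\epsilon}$, as asserted. There is no genuine obstacle at this stage — the entire difficulty of the corollary has already been discharged in the proof of Theorem \ref{main} together with the citation of Young's bound — so the proof amounts to nothing more than recording this substitution and the ensuing computation.
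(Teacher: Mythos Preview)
Your proposal is correct and matches the paper's approach exactly: the corollary is stated immediately after the paragraph citing Young's hybrid Weyl bound $\theta=\frac{1}{6}$, and the paper treats it as the direct specialization of Theorem~\ref{main} at that value. Your remark that the genuinely hybrid nature of Young's bound (as opposed to the Conrey--Iwaniec conductor-aspect bound) is essential also mirrors the paper's own emphasis.
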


Corollary~\ref{cor:main} confirms the last row of Table~\ref{table} unconditionally, and gives a~$4$~\% improvement to the record $\frac{25}{36} = 0.69444 \cdots$ of Soundararajan--Young~\cite[Theorem~1.1]{SoundararajanYoung2013}. This marks the largest percentage improvement since the work of Luo--Sarnak~\cite[Theorem~1.4]{LuoSarnak1995}, which also gives, by coincidence, a $4$~\% improvement to the result of Iwaniec~\cite[Theorem~2]{Iwaniec1984}.
\begin{remark}
Iwaniec~\cite[Equation~(7.3)]{Iwaniec1984} observed the exponent $\frac{2}{3}+\epsilon$ for almost all $x$. In fact, the concept of Koyama~\cite[Theorem~1]{Koyama2016} justifies this statement outside a closed subset of $[2, \infty)$ with finite logarithmic measure, inspired by Gallagher~\cite[Equation~(2)]{Gallagher1980}. The sharpest result in this respect is due to Balog et al.~\cite[Theorem~1]{BalogBiroHarcosMaga2019}, building on the prior work of Cherubini--Guerreiro~\cite[Theorem~1.4]{CherubiniGuerreiro2018}.
\end{remark}

Furthermore, if $\theta$ could be taken smaller than $\frac{1}{6}$, then one can break the $\frac{2}{3}$-barrier, which further reinforces our belief in the validity of the conjectural exponent $\frac{1}{2}+\epsilon$. As a consequence in this direction, we record the following conditional corollary of Theorem~\ref{main}.
\begin{corollary}\label{cor:main2}
Assume the generalised Lindel\"{o}f hypothesis $\theta = 0$ in~\eqref{eq:hybrid}. Then we have for any $\epsilon > 0$ that
\begin{equation*}
\mathcal{E}_{\Gamma}(x) \ll_{\epsilon} x^{\frac{5}{8}+\epsilon}.
\end{equation*}
\end{corollary}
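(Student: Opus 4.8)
The plan is to read this off directly from Theorem~\ref{main}, so very little work is needed. The generalised Lindel\"{o}f hypothesis for quadratic Dirichlet $L$-functions is, by definition, the statement that $L(\frac{1}{2}+it,\chi_{D})\ll_{\epsilon}(|D|(1+|t|))^{\epsilon}$ for every fundamental discriminant $D$ and every real $t$; this is exactly the hybrid subconvex bound~\eqref{eq:hybrid} with $\theta=0$. Since $\theta=0$ lies in the admissible range $0\le\theta\le\frac16$ demanded by Theorem~\ref{main} --- recall that the method only breaks down for $\frac16<\theta<\frac14$, which is disjoint from $0$ --- all hypotheses of that theorem are in force.

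It then suffices to specialise the conclusion~\eqref{eq:main}, which reads $\mathcal{E}_{\Gamma}(x)\ll_{\epsilon}x^{5/8+\theta/4+\epsilon}$, to $\theta=0$; this gives $\mathcal{E}_{\Gamma}(x)\ll_{\epsilon}x^{5/8+\epsilon}$, as claimed. The only point requiring the slightest care is the bookkeeping of the $\epsilon$'s: the final $\epsilon$ must absorb both the $\epsilon$ implicit in the Lindel\"{o}f-quality estimate for the $L$-values and the losses incurred by the finitely many invocations of that estimate within the proof of Theorem~\ref{main}, which it does under the standing convention that $\epsilon$ need not be the same at each occurrence. I expect no genuine obstacle.

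Equivalently, one may re-run the proof of Theorem~\ref{main} with $\theta$ set to $0$ throughout; this makes transparent that $\frac58$ is the natural floor of the present circle of ideas, and that any further improvement would demand arithmetic information strictly beyond size bounds for the central values $L(\frac{1}{2},\chi_{D})$ --- consistent with the heuristics recalled just before the corollary that point toward the conjectural exponent $\frac{1}{2}+\epsilon$.
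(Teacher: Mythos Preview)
Your proposal is correct and takes essentially the same approach as the paper: Corollary~\ref{cor:main2} is stated there as an immediate specialisation of Theorem~\ref{main} with no separate proof, and your argument simply makes explicit the substitution $\theta=0$ into~\eqref{eq:main} together with the check that $0\in[0,\tfrac16]$.
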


Corollary~\ref{cor:main2} affirmatively answers the statement in the first display on~\cite[Page~707]{BalkanovaFrolenkovRisager2022}.

We now conclude the introduction with some structural remarks and summary of the proof. We emphasise that the proof of Theorem~\ref{main} highlights both the geometric explicit formula due to Kuznetsov~\cite[Equation~(7.11)]{Kuznetsov1978} and Bykovski\u{\i}~\cite[Equation~(2.2)]{Bykovskii1994}~as well as the spectral explicit formula due to Selberg~\cite{Selberg1956,Selberg2014} and Iwaniec~\cite[Lemma~1]{Iwaniec1984}. Moreover, subconvexity for quadratic Dirichlet $L$-functions plays a crucial role in the analysis of both explicit formulae. As a brief methodological overview, the key inputs of the proof of Theorem~\ref{main} include
\begin{enumerate}
\item\label{item:1} subconvex bounds for character sums in multiple ways;
\item zero density estimates of Heath-Brown~\cite[Theorem~3]{HeathBrown1995};
\item a result of Soundararajan--Young~\cite[Equation~(17)]{SoundararajanYoung2013};
\item a hidden symmetry in the optimisation problem.
\end{enumerate}
The underlying reason for being able to achieve a stronger bound than~\eqref{eq:Sound-Young} is our fuller~usage of the automorphic machinery and its reduction to fairly fundamental arithmetic problems. As a key input, Soundararajan--Young~\cite[Equation~(17)]{SoundararajanYoung2013} used Weyl-strength subconvex bounds due to Conrey--Iwaniec~\cite[Corollary~1.5]{ConreyIwaniec2000}, which are in some sense commensurate with strong bounds for quadratic character sums that we address in~\eqref{item:1}. Nonetheless, their only purely automorphic input is the Luo--Sarnak bound, which results in the worse exponent than in Corollary~\ref{cor:main}. If we choose to adopt their arithmetic toolbox, then a straightforward use of the last display on~\cite[Page 71]{Cai2002} cannot make any improvement over~\eqref{eq:Sound-Young} because the exponent of $T$ is smaller than $1$. In this sense, the exponent of $T$ determines the borderline of the preference between the automorphic and arithmetic techniques.

To handle this situation, we refine to a maximum extent the ideas of Iwaniec~\cite{Iwaniec1984}~and Cai~\cite{Cai2002} by replacing Burgess bounds for character sums with stronger bounds in~\eqref{item:1}, and estimating nontrivially a certain sum over the moduli. More precisely, we decompose the sum according as the corresponding quadratic Dirichlet $L$-function has a zero in a given~rectangle or not, and leverage zero density estimates of Heath-Brown~\cite[Theorem~3]{HeathBrown1995}. If there are no zeros in the rectangle, then we make use of a variation of~\cite[Theorem~1.1]{BalkanovaFrolenkovRisager2022}. The~sole usage of the latter would recover~\eqref{eq:Sound-Young}, hinting that the former is also equally requisite. This step makes the ensuing optimisation problem more amenable in the sense that otherwise the resulting expression that we encounter in~\eqref{eq:final} would remain elusive due to a tautology.

Loosely speaking, a combination with~\cite[Equation~(17)]{SoundararajanYoung2013} leads to the reduction of the contribution of an error term in the explicit formula of Iwaniec~\cite[Lemma~1]{Iwaniec1984}, enabling a nontrivial resolution to the optimisation problem that we resolve in Section~\ref{sect:proof}. Otherwise, Theorem~\ref{main} could not exhibit a monotonic shrinkage with respect to $\theta$ that is necessary for Corollary~\ref{cor:main2}. Another aesthetic flavour is a hidden symmetry in our optimisation problem, namely the choice of the zero-free region in question balances all terms at one fell swoop.

\subsection*{Acknowledgements}
The proof in Section~\ref{sect:proof} stems from an idea of Gergely Harcos, whom the author would like to thank for many instructive suggestions and encouragement. Special thanks are owed to Dimitrios Chatzakos and Dmitry Frolenkov for their generous feedback.

\section{Bounds for Character Sums}
This section addresses certain character sums appearing in the proof of Theorem~\ref{main}.
\begin{lemma}\label{lem:1}
For a primitive Dirichlet character $\chi$ modulo a cubefree conductor $q$, suppose that the hybrid subconvex bound
\begin{equation}\label{eq:subconvex-Dirichlet}
L \left(\frac{1}{2}+it, \chi \right) \ll_{\epsilon} (q(1+|t|))^{\theta+\epsilon}
\end{equation}
holds for some $0 \leq \theta < \frac{1}{4}$. Then we have for any $x \geq q^{\frac{4r^{2} \theta-r-1}{2r(r-2)}}$, $r \geq 3$, and $\epsilon > 0$ that
\begin{equation*}
\sum_{n \leq x} \chi(n) \ll_{\epsilon, r} x^{\alpha} q^{\beta+\epsilon},
\end{equation*}
where
\begin{equation*}
\alpha = \left(\frac{1}{2}+\theta \right) \frac{r-1}{r(1+\theta)-1}, \qquad 
\beta = \frac{r(4r-3)\theta+\theta}{4r(r(1+\theta)-1)}.
\end{equation*}
\end{lemma}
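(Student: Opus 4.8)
The route I would take combines two ingredients: the Burgess amplification scheme (in the form valid for cubefree moduli developed by Heath-Brown) and the long-interval consequences of the hybrid bound \eqref{eq:subconvex-Dirichlet}. The integer $r$ should play its usual Burgess role as the H\"{o}lder exponent tied to the Weil bound for a quotient of $2r$ linear factors, while \eqref{eq:subconvex-Dirichlet} is what converts the classical $q$-power into the smaller exponent $\beta$.

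First I would record what \eqref{eq:subconvex-Dirichlet} gives for initial segments. Writing $M_{\chi}(y) = \sum_{n \le y}\chi(n)$, truncated Perron together with a contour shift to $\Re s = \tfrac12$ yields $M_{\chi}(y) \ll y^{1/2}(qT)^{\theta+\epsilon} + y^{1+\epsilon}T^{-1}$ for any $T \ge 1$; equivalently one smooths the cut-off over a scale $\delta$ and exploits rapid decay of the test function to localise the vertical integral to $|t| \ll \delta^{-1}q^{\epsilon}$. Replacing the pointwise use of \eqref{eq:subconvex-Dirichlet} by a H\"{o}lder split against the $2r$-th moment $\int_{|t| \le T}|L(\tfrac12+it,\chi)|^{2r}(1+|t|)^{-1}\,dt \ll (qT)^{(2r-2)\theta+\epsilon}$ — which follows from \eqref{eq:subconvex-Dirichlet} combined with the unconditional estimate $\int_{1}^{T}|L(\tfrac12+it,\chi)|^{2}\,dt \ll Tq^{\epsilon}$ — and optimising $T$ (equivalently $\delta$) produces a family, indexed by $r$, of bounds of the shape $M_{\chi}(y) \ll y^{a}q^{b+\epsilon}$ whose exponents already have the structure of $\alpha$ and $\beta$.

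Next comes the Burgess shifting device, which is what allows the $x$-exponent to be traded down and the admissible range to be widened. I would replace $\sum_{n \le x}\chi(n)$ by the average over $1 \le \ell \le A$ and primes $p \sim P$ with $p \nmid q$ of $\sum_{n \le x}\chi(n+\ell p)$, at the cost of $O(AP)$; use complete multiplicativity to write $\chi(n+\ell p) = \chi(p)\chi(\overline{p}n+\ell)$, regroup by the residue $\overline{p}n \bmod q$, apply H\"{o}lder with exponent $2r$, and invoke the Weil bound for the resulting complete character sums modulo the cubefree $q$ (this is precisely Heath-Brown's extension of Burgess). At the step where short initial segments appear one feeds in the long-interval estimate from the previous paragraph rather than the trivial bound. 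The admissibility conditions $P^{2} \le q^{1-\epsilon}$ (so the Weil input is nontrivial) and $AP \le x$ (so the shifting error is subordinate), together with the hypotheses $r \ge 3$ and $\theta < \tfrac14$, keep all auxiliary parameters within legitimate ranges.

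Finally, optimise: choose $A$, $P$, and the height $T$ so that the principal term is balanced against the shifting error $AP$ and the Perron tail. The balance point — where, pleasantly, the relevant terms coincide all at once — should output the exponents $\alpha = (\tfrac12+\theta)\frac{r-1}{r(1+\theta)-1}$ and $\beta = \frac{r(4r-3)\theta+\theta}{4r(r(1+\theta)-1)}$, while the hypothesis $x \ge q^{(4r^{2}\theta-r-1)/(2r(r-2))}$ should emerge exactly as the requirement that the principal term dominates the error terms and that the optimal parameters are $\ge 1$. I expect the genuine difficulty to be not any single conceptual step but the bookkeeping of this multi-parameter optimisation: verifying that every error term is uniformly subordinate throughout the stated range of $x$, checking that the only arithmetic inputs invoked — the Weil bound for cubefree moduli and the elementary second moment of $L(\tfrac12+it,\chi)$ — are indeed sufficient, and reconciling the constants so that precisely the stated $\alpha$, $\beta$ come out rather than a nearby but weaker pair.
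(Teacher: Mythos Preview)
Your approach is genuinely different from the paper's and considerably more elaborate. The paper's proof is a one-parameter trick: smooth the sharp cutoff at $x$ over a window of length $h$, so that
\[
\sum_{n\le x}\chi(n)=S(\chi,w)-\sum_{x<n\le x+h}\chi(n)w(n),
\]
where $S(\chi,w)=\frac{1}{2\pi i}\int_{(1/2)}\widetilde{w}(s)L(s,\chi)\,ds$. The Mellin transform of $w$ truncates the integral to $|\Im s|\ll (x/h)^{1+\epsilon}$, and then~\eqref{eq:subconvex-Dirichlet} gives $S(\chi,w)\ll x^{1/2}q^{\theta}(x/h)^{\theta+\epsilon}$. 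The tail over $(x,x+h]$ is bounded by the \emph{classical} Burgess estimate (\cite[Theorem~12.6]{IwaniecKowalski2004}) for cubefree moduli as a black box, giving $h^{1-1/r}q^{(r+1)/(4r^{2})+\epsilon}$. Equating the two terms fixes $h$ and produces $\alpha,\beta$ directly; the condition $h\le x$ is precisely the lower bound on $x$ in the statement.

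By contrast, you propose to rebuild the Burgess amplification from scratch (shifting by $\ell p$, H\"{o}lder with exponent $2r$, Weil for cubefree moduli) and to inject the Perron/subconvex input at the step where initial segments appear inside the machine, followed by a three-parameter optimisation in $A,P,T$. This is a legitimate line of attack, and something like it underlies the Petrow--Young framework you are implicitly invoking, but it is much heavier than what is needed here. The paper's point is that once one already \emph{has} the Burgess bound for short intervals, the only new idea is to use~\eqref{eq:subconvex-Dirichlet} on the smoothed long sum; there is no need to reopen the Burgess engine. Your route, if carried out correctly, should land on the same exponents, but the bookkeeping you yourself flag as the main difficulty is entirely avoidable: the paper's argument has a single free parameter and a two-term balance, so the stated $\alpha,\beta$ and the threshold on $x$ fall out by a one-line computation rather than a multi-parameter search.
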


\begin{remark}
Lemma~\ref{lem:1} with $\theta = \frac{1}{6}$ and $r = 2$ or $3$ is in harmony with~\cite[Theorem~1.6]{PetrowYoung2023}; see also the remark in the first paragraph on~\cite[Page~1890]{PetrowYoung2023}. Furthermore, our restriction on $x$ is reminiscent of the P\'{o}lya--Vinogradov threshold $x \geq q^{\frac{1}{2}}$, and becomes removable under the generalised Lindel\"{o}f hypothesis $\theta = 0$ when $r \in \N$ is sufficiently large; see Lemma~\ref{lem:2}.~It appears that Petrow--Young~\cite[Theorem~1.6]{PetrowYoung2023} omitted such a constraint being a bit terse.
\end{remark}

\begin{proof}
The strategy obeys verbatim the idea of Petrow--Young; see~\cite[Theorem~1.6]{PetrowYoung2023} and the sketch of proof that follows. For completeness, we spell out the argument with~exactitude.

Let $0 < h \leq x$ be a parameter to be chosen later. Let $w$ be a smooth weight function so that $w(t) = 1$ for $0 \leq t \leq x$, $w(t) = 0$ for $t \geq x+h$, and satisfying $w^{(j)}(t) \ll_{j} h^{-j}$ for all $t > 0$ and $j \in \N_{0}$. Following the notation of Petrow--Young~\cite[Page~1889]{PetrowYoung2023}, we define
\begin{equation*}
S(\chi, w) \coloneqq \sum_{n = 1}^{\infty} \chi(n) w(n) = \frac{1}{2\pi i} \int_{(\sigma)} \widetilde{w}(s) L(s, \chi) ds.
\end{equation*}
Integration by parts ensures an arbitrary saving unless $\Im(s) \ll (\frac{x}{h})^{1+\epsilon}$. Choosing $\sigma = \frac{1}{2}$~and using the assumption~\eqref{eq:subconvex-Dirichlet} implies
\begin{equation*}
S(\chi, w) \ll_{\epsilon} x^{\frac{1}{2}} q^{\theta+\epsilon} \left(\frac{x}{h} \right)^{\theta+\epsilon}.
\end{equation*}
We observe that
\begin{equation*}
\sum_{n \leq x} \chi(n) = S(\chi, w)-\sum_{x < n \leq x+h} \chi(n) w(n).
\end{equation*}
It now follows from~\cite[Theorem~12.6]{IwaniecKowalski2004} (cf.~\cite{Burgess1962,Burgess1963}) and partial summation that
\begin{equation*}
\sum_{n \leq x} \chi(n) \ll_{\epsilon, r} x^{\frac{1}{2}} q^{\theta+\epsilon} \left(\frac{x}{h} \right)^{\theta+\epsilon}+h^{1-\frac{1}{r}} q^{\frac{r+1}{4r^{2}}+\epsilon}
\end{equation*}
for any $r \in \N$. The optimal choice of $h$ boils down to
\begin{equation*}
h = x^{(\frac{1}{2}+\theta) \frac{1}{1+\theta-\frac{1}{r}}} q^{(\theta-\frac{r+1}{4r^{2}}) \frac{1}{1+\theta-\frac{1}{r}}} \leq x.
\end{equation*}
In order for the last inequality to hold, we must assume that $x \geq q^{\frac{4r^{2} \theta-r-1}{2r(r-2)}}$, because otherwise the optimal bound comes from the choice $h = x$. The proof of Lemma~\ref{lem:1} is complete.
\end{proof}

In anticipation of the subsequent analysis, we restate Lemma~\ref{lem:1} for quadratic characters.
\begin{lemma}\label{lem:2}
For a fundamental discriminant $D$, let $\chi_{D} = (\frac{D}{\cdot})$ be the primitive quadratic character modulo $|D|$, and suppose that the hybrid subconvex bound
\begin{equation*}
L \left(\frac{1}{2}+it, \chi_{D} \right) \ll_{\epsilon} (|D|(1+|t|))^{\theta+\epsilon}
\end{equation*}
holds for some $0 \leq \theta < \frac{1}{4}$. Then we have for any $x \geq |D|^{2\theta}$ and $\epsilon > 0$ that
\begin{equation}\label{eq:sums-of-quadratic-characters}
\sum_{n \leq x} \left(\frac{D}{n} \right) \ll_{\epsilon} x^{\alpha} |D|^{\beta+\epsilon},
\end{equation}
where
\begin{equation}\label{eq:alpha-beta}
\alpha = 1-\frac{1}{2(1+\theta)}, \qquad \beta = \frac{\theta}{1+\theta}.
\end{equation}
\end{lemma}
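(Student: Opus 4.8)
The plan is to deduce the lemma from Lemma~\ref{lem:1} by specialising to $\chi = \chi_{D}$ and letting the Burgess parameter $r$ tend to infinity. Since $D$ is a fundamental discriminant, $\chi_{D}$ is a primitive character modulo $|D|$ and the assumed hybrid subconvex bound is precisely~\eqref{eq:subconvex-Dirichlet} with $q = |D|$. Strictly speaking $|D|$ need not be cubefree — its $2$-part can equal $2^{3}$ — but cube-freeness enters the proof of Lemma~\ref{lem:1} only through Burgess' inequality applied to the short tail sum, where one may first split off the bounded $2$-part of $\chi_{D}$; so Lemma~\ref{lem:1} remains available for $\chi = \chi_{D}$, $q = |D|$, and every integer $r \geq 3$.

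Write $\alpha(r), \beta(r)$ for the exponents furnished by Lemma~\ref{lem:1} and $\gamma(r) \coloneqq \frac{4r^{2}\theta - r - 1}{2r(r-2)}$ for its admissibility threshold. A direct computation shows that, as $r \to \infty$, one has $\alpha(r) \uparrow 1 - \frac{1}{2(1+\theta)} = \alpha$, $\beta(r) \downarrow \frac{\theta}{1+\theta} = \beta$, and $\gamma(r) \to 2\theta$. The identity that makes everything cohere is $2\theta\alpha + \beta = 2\theta$, equivalently $\frac{\beta}{1-\alpha} = 2\theta$; this is also why $|D|^{2\theta}$ is the natural cut-off in~\eqref{eq:sums-of-quadratic-characters}, in analogy with the P\'olya--Vinogradov threshold $q^{1/2}$ at $\theta = \frac14$.

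Now fix $\epsilon > 0$. Because the implied constant in Lemma~\ref{lem:1} depends on $r$, I would fix one integer $r = r(\epsilon) \geq 3$, large enough that $\beta(r) \leq \beta + \epsilon$ and $\gamma(r) \leq 2\theta + 2(1+\theta)\epsilon$. For $x \geq |D|^{\gamma(r)}$, Lemma~\ref{lem:1} gives $\sum_{n \leq x}\chi_{D}(n) \ll_{\epsilon} x^{\alpha(r)}|D|^{\beta(r)+\epsilon} \leq x^{\alpha}|D|^{\beta+2\epsilon}$, using $\alpha(r) \leq \alpha$, $\beta(r) \leq \beta + \epsilon$, and $x \geq 1$. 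In the complementary window $|D|^{2\theta} \leq x < |D|^{\gamma(r)} \leq |D|^{2\theta + 2(1+\theta)\epsilon}$ — which is nonempty only when $\theta > \frac18$ — the target bound is weaker than trivial: since $1 - \alpha = \frac{1}{2(1+\theta)}$, the inequality $x \leq |D|^{2\theta + 2(1+\theta)\epsilon}$ gives $x^{1-\alpha} \leq |D|^{\beta+\epsilon}$, so that $\bigl|\sum_{n \leq x}\chi_{D}(n)\bigr| \leq x = x^{\alpha} x^{1-\alpha} \leq x^{\alpha}|D|^{\beta+\epsilon}$. Combining the two ranges and renaming $\epsilon$ establishes~\eqref{eq:sums-of-quadratic-characters}.

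The one genuinely delicate point is this threshold bookkeeping: one cannot literally take $r = \infty$ in Lemma~\ref{lem:1}, both because the implied constants diverge and because $\gamma(r)$ overshoots $2\theta$ for large $r$ once $\theta > \frac18$, so Lemma~\ref{lem:1} by itself does not reach down to $x = |D|^{2\theta}$. The uncovered strip is only $o(1)$ thick in the exponent as $r \to \infty$, and the identity $2\theta\alpha + \beta = 2\theta$ is precisely what lets the trivial bound cover it. Everything else is routine estimation with the $\epsilon$'s.
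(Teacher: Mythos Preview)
Your approach is essentially the same as the paper's: specialise Lemma~\ref{lem:1} to $\chi_{D}$, dispose of the cubefree hypothesis via the $2$-part of the Jacobi--Kronecker symbol, and let $r \to \infty$. The paper's proof is a terse two-sentence sketch that simply asserts the restriction on $x$ ``boils down to $x \geq |D|^{2\theta}$'' without justification.

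Your version is actually more careful than the paper's on the one genuinely subtle point. You correctly observe that the admissibility threshold $\gamma(r) = \frac{4r^{2}\theta - r - 1}{2r(r-2)}$ converges to $2\theta$ but \emph{from above} once $\theta > \frac{1}{8}$, so Lemma~\ref{lem:1} alone never quite reaches down to $x = |D|^{2\theta}$ for any finite $r$; the paper glosses over this. Your remedy --- fixing $r = r(\epsilon)$ so that $\gamma(r) \leq 2\theta + 2(1+\theta)\epsilon$, and then covering the thin strip $|D|^{2\theta} \leq x < |D|^{\gamma(r)}$ by the trivial bound via the identity $(1-\alpha) \cdot 2\theta = \beta$ --- is correct and is precisely what is needed to make the paper's sketch rigorous.
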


\begin{remark}
The trivial pair of exponents is $\alpha = \frac{1}{2}$ and $\beta = \vartheta$, which recovers the Burgess bound~\cite[Theorem~12.6]{IwaniecKowalski2004} if we substitute the Burgess exponent $\vartheta = \frac{3}{16}$ in the conductor aspect. Note that the hybrid Burgess exponent $\theta = \frac{3}{16}$ is due to Heath-Brown~\cite{HeathBrown1978,HeathBrown1980}.
\end{remark}

\begin{proof}
The conductor is not necessarily cubefree since it may be divisible by $8$. In this case, however, the occurrence of the Jacobi--Kronecker symbol enables the removal of the~extra~$4$. Upon taking $r \in \N$ sufficiently large in Lemma~\ref{lem:1} and confirming that the restriction on $x$ therein boils down to $x \geq |D|^{2\theta}$, the proof of Lemma~\ref{lem:2} is complete.
\end{proof}

The following result (of independent interest) complements Lemma~\ref{lem:2}, and considerably improves upon~\cite[Lemma~4.1]{LiSarnak2004}. More precisely, if $L(s, \chi_{D})$ has no zeros in a given box, then one can deduce a stronger pair of exponents (possibly of Lindel\"{o}f-type) in~\eqref{eq:sums-of-quadratic-characters}.
\begin{lemma}\label{lem:zero-density}
For a fundamental discriminant $D$, let $\chi_{D} = (\frac{D}{\cdot})$ be the primitive quadratic character modulo $|D|$, and suppose that $L(s, \chi_{D})$ has no zeros in $[\sigma, 1] \times [-\log x, \log x]$ for $\frac{1}{2} \leq \sigma \leq 1$. Then we have for any $\epsilon > 0$ that
\begin{equation*}
\sum_{n \leq x} \left(\frac{D}{n} \right) \ll_{\epsilon} x^{\sigma} |D|^{\epsilon}.
\end{equation*}
\end{lemma}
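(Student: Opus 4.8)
The plan is to express the character sum $\sum_{n \leq x}\chi_D(n)$ as a contour integral of $\widetilde{w}(s)L(s,\chi_D)$ (with a smooth majorant $w$ as in the proof of Lemma~\ref{lem:1}), and then exploit the hypothesised zero-free rectangle to push the contour far to the left instead of merely to $\sigma = \tfrac12$. Concretely, one starts from a Perron-type truncated formula
\begin{equation*}
\sum_{n \leq x}\chi_D(n) = \frac{1}{2\pi i}\int_{c-iT}^{c+iT}L(s,\chi_D)\frac{x^s}{s}\,ds + (\text{error}),
\end{equation*}
with $c = 1+\tfrac{1}{\log x}$ and $T$ a parameter of size a small power of $\log x$ (so that $[\,-\log x,\log x\,]$ comfortably contains the relevant ordinates; one can also take $T$ a fixed power of $x$ and enlarge the rectangle accordingly, at the cost of $\log$-factors absorbed into $x^\epsilon$). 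Using a smoothed weight in place of the sharp cutoff $x^s/s$ makes the truncation error genuinely negligible rather than of size $x^{1+\epsilon}/T$, which is why I would run the argument through $S(\chi_D,w)$ exactly as in Lemma~\ref{lem:1}.

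Next I move the contour from $\Re(s) = c$ to $\Re(s) = \sigma$. Since $L(s,\chi_D)$ is entire (as $\chi_D$ is non-principal) and, by hypothesis, has no zeros in $[\sigma,1]\times[-\log x,\log x]$, the only pole encountered is the simple pole of the kernel $x^s/s$ at $s = 0$ — but $0$ lies to the left of $\sigma \geq \tfrac12$, so in fact no residue is picked up and the shift is clean on the truncated contour; the horizontal segments at height $\pm T$ contribute a negligible amount because the smoothed kernel $\widetilde{w}(s)$ decays rapidly in $|\Im(s)|$. Thus
\begin{equation*}
\sum_{n \leq x}\chi_D(n) \ll x^{\sigma}\int_{\Re(s)=\sigma,\ |\Im(s)|\leq T}\bigl|L(s,\chi_D)\widetilde{w}(s)\bigr|\,|ds| + x^{\epsilon}.
\end{equation*}
It remains to bound $L(\sigma+it,\chi_D)$ on this segment. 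Here the zero-free hypothesis is used a second time, via the standard Borel--Carathéodory plus Hadamard three-circles argument (the Landau-type bound): a zero-free rectangle of the stated shape yields $\log L(\sigma+it,\chi_D) \ll \log|D| + \log\log x \ll_\epsilon |D|^{\epsilon}$, i.e. $L(\sigma+it,\chi_D) \ll_\epsilon |D|^{\epsilon}$ uniformly for $|t| \leq \log x$. Combining this with the rapid decay of $\widetilde{w}$ (which makes the $t$-integral converge and contributes only a constant, after pushing $T$ slightly beyond $\log x$ where the weight has already killed the integrand) gives the claimed bound $\sum_{n\leq x}\chi_D(n) \ll_\epsilon x^{\sigma}|D|^{\epsilon}$.

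The main obstacle is the quantitative conversion of the zero-free region into a pointwise upper bound for $L$ on the line $\Re(s) = \sigma$: one must be careful that the three-circles/Borel--Carathéodory step only needs $L$ to be zero-free and of controlled order of magnitude on a slightly larger region than $[\sigma,1]\times[-\log x,\log x]$, and that the ambient bound for $|L|$ used to seed the argument (say a convexity bound near $\Re(s)=1$, giving $\log|L| \ll \log(|D|(2+|t|))$) is polynomial in $|D|$ and in $|t|\leq \log x$, so that after the logarithm one lands at $O_\epsilon(|D|^{\epsilon})$ rather than at a genuine power of $|D|$. A minor secondary point is checking that the smoothing error term and the choice of $T$ can indeed be absorbed into $x^{\epsilon}$ (equivalently $|D|^\epsilon$ after noting the bound is only claimed up to such a factor); this is routine given the construction of $w$ in Lemma~\ref{lem:1}. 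No subconvexity is needed — the strength of the hypothesis (a zero-free box) replaces it entirely, which is precisely why this lemma improves on \cite[Lemma~4.1]{LiSarnak2004}.
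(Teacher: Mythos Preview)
Your approach matches the paper's: write the sum as a Mellin integral, shift the contour to $\Re(s)=\sigma$, and bound $L$ on that line using the zero-free hypothesis. The paper uses the smoothing $e^{-n/x}$ (kernel $\Gamma(s)$, whose exponential decay makes the restriction to $|\Im(s)|\le\log x$ automatic) and simply cites \cite[Lemma~1]{Bykovskii1994} (going back to Barban) for the Lindel\"{o}f-type bound, while you use the weight from Lemma~\ref{lem:1} and sketch the Borel--Carath\'{e}odory/three-circles argument yourself.

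There is, however, a genuine slip in the quantitative step. You claim the zero-free rectangle gives $\log L(\sigma+it,\chi_D)\ll\log|D|+\log\log x$ and then deduce $L\ll_\epsilon|D|^\epsilon$. But $\log L\ll\log|D|$ only yields $L\ll|D|^{O(1)}$; the chain as written is a non sequitur (the interposed ``$\ll_\epsilon|D|^\epsilon$'' is true of $\log|D|$ but irrelevant to bounding $L$). Borel--Carath\'{e}odory alone, seeded by the convexity bound $\log|L|\ll\log(|D|(2+|t|))$, indeed produces only $|\log L|\ll\log|D|$ on a large disc inside the zero-free region. The crucial second step---Hadamard three-circles, interpolating between a small circle centred near $\Re(s)>1$ (where $|\log L|\ll\log\log|D|$ from the Euler product) and that large disc---is what upgrades this to $|\log L|\ll(\log|D|)^{1-\delta}$ for some $\delta=\delta(\sigma)>0$, and only then does $L\ll_\epsilon|D|^\epsilon$ follow. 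You named the right pair of tools but wrote down the output of the first and the conclusion of the second without the bridge; this bridge is precisely the content of the Barban--Bykovski\u{\i} lemma the paper invokes as a black box.
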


\begin{proof}
By standard Fourier analysis and Mellin inversion, Lemma~\ref{lem:zero-density} is equivalent to
\begin{equation}\label{eq:Fourier-analysis}
\sum_{n = 1}^{\infty} \left(\frac{D}{n} \right) e^{-\frac{n}{x}} = \frac{1}{2\pi i} \int_{(2)} \Gamma(s) L(s, \chi_{D}) x^{s} ds \ll_{\epsilon} x^{\sigma} |D|^{\epsilon}.
\end{equation}
Moving the line of integration to $(\sigma)$ with $\frac{1}{2} \leq \sigma \leq 1$ implies that the above integral equals
\begin{equation}\label{eq:integral}
\frac{1}{2\pi i} \int_{(\sigma)} \Gamma(s) L(s, \chi_{D}) x^{s} ds.
\end{equation}
By~\cite[Lemma~1]{Bykovskii1994}\footnote{This idea traces back to the work of Barban; see~\cite[Lemma~3]{Barban1962} or~\cite[Lemma~5.3]{Barban1966}.}, if $L(s, \chi_{D})$ has no zeros in $[\sigma, 1] \times [-\log x, \log x]$, then a Lindel\"{o}f-type bound for $L(s, \chi_{D})$ is available in (almost) the full rectangle, which is sharp up to $\epsilon$. Hence, estimating~\eqref{eq:integral} trivially via Stirling's approximation completes the proof of Lemma~\ref{lem:zero-density}.
\end{proof}

\section{Averages of Zagier \texorpdfstring{$L$}{}-Series}\label{sect:averages}
Implicit in the work of Soundararajan--Young~\cite{SoundararajanYoung2013} and Balkanova--Frolenkov~\cite{BalkanovaFrolenkov2019} is~a connection of the pseudoprime counting function $\Psi_{\Gamma}(x)$ with real quadratic fields, which is encoded in Zagier $L$-series described below. Given $q \in \mathbb{N}$ and a discriminant $\delta$, we define
\begin{equation}\label{eq:def}
\rho_{q}(\delta) \coloneqq \#\{x \tpmod{2q}: x^{2} \equiv \delta \tpmod{4q} \},
\end{equation}
and
\begin{equation*}
\lambda_{q}(\delta) \coloneqq \sum_{q_{1}^{2} q_{2} q_{3} = q} \mu(q_{2}) \rho_{q_{3}}(\delta), \qquad \rho_{q}(\delta) = \sum_{q_{1} q_{2} = q} \mu(q_{2})^{2} \lambda_{q_{1}}(\delta).
\end{equation*}
It is straightforward to verify that both functions are multiplicative in $q$ for $\delta$ fixed. We now define the Zagier $L$-series (sometimes termed generalised Dirichlet $L$-functions) by
\begin{equation}\label{eq:Zagier}
L(s, \delta) \coloneqq \frac{\zeta(2s)}{\zeta(s)} \sum_{q = 1}^{\infty} \frac{\rho_{q}(\delta)}{q^{s}} = \sum_{q = 1}^{\infty} \frac{\lambda_{q}(\delta)}{q^{s}}, \qquad \Re(s) > 1.
\end{equation}
It extends meromorphically to the whole complex plane and vanishes unless $\delta \equiv 0, 1 \tpmod{4}$. The Zagier $L$-series may be viewed as a simultaneous extension of the Riemann zeta function and quadratic Dirichlet $L$-functions in the sense that if $\delta = 0$, then $L(s, \delta) = \zeta(2s-1)$, while if $D$ is a fundamental discriminant, then $L(s, D) = L(s, \chi_{D})$. In general, if $\delta = D \ell^{2}$ denotes a nonzero discriminant, then~\cite[Proposition~3]{Zagier1977} states that
\begin{equation}\label{eq:T}
L(s, \delta) = \ell^{\frac{1}{2}-s} T_{\ell}^{(D)}(s) L(s, \chi_{D}),
\end{equation}
where
\begin{equation*}
T_{\ell}^{(D)}(s) \coloneqq \sum_{\ell_{1} \ell_{2} = \ell} \frac{\mu(\ell_{1}) \chi_{D}(\ell_{1}) \tau_{s}(\ell_{2})}{\sqrt{\ell_{1}}}, \qquad \tau_{s}(n) \coloneqq n^{s-\frac{1}{2}} \sum_{d \mid n} d^{1-2s}.
\end{equation*}
Furthermore, the corresponding completed Zagier $L$-series
\begin{equation*}
\Lambda(s, \delta) \coloneqq \left(\frac{\pi}{|\delta|} \right)^{-\frac{s}{2}} \Gamma \left(\frac{s}{2}+\frac{1}{4}-\frac{\mathrm{sgn}(\delta)}{4} \right) L(s, \delta)
\end{equation*}
satisfies the functional equation (see~\cite[Proposition~3]{Zagier1977} and~\cite[Lemma~2.1]{SoundararajanYoung2013})
\begin{equation*}
\Lambda(s, \delta) = \Lambda(1-s, \delta).
\end{equation*}
By~\cite[Section~0.8]{Diaconu1999}, it is well known that the generalised Riemann Hypothesis for $L(s, \delta)$ is equivalent to the one for $L(s, \chi_{D})$. Similarly, the generalised Lindel\"{o}f hypothesis for $L(s, \delta)$ in the hybrid aspect is equivalent to $\theta = 0$; cf.~\cite[Lemma~4.2]{BalkanovaFrolenkov2018}. The Zagier $L$-series~\eqref{eq:Zagier} and its incarnation appeared for example in the literature~\cite{Diaconu1999,GoldfeldHoffstein1985,Siegel1956,Zagier1977}.

In this work, we study averages over certain discriminants of Zagier $L$-series on the critical line $\Re(s) = \frac{1}{2}$. Given an integer $n \geq 3$, let $\delta = n^{2}-4$ denote a nonzero discriminant. The work of Balkanova--Frolenkov~\cite[Theorem~1.3]{BalkanovaFrolenkov2019-3} then implies that $L(\frac{1}{2}+it, n^{2}-4)$ exhibits a density function
\begin{equation}\label{eq:density-function}
m_{t}(x) \coloneqq 
	\begin{dcases}
	\frac{1}{2\zeta(\frac{3}{2})} \left(\log(x^{2}-4)-\frac{\pi}{2}+3\gamma-2\frac{\zeta^{\prime}(\frac{3}{2})}{\zeta(\frac{3}{2})}-\log 8\pi \right) & \text{if $t = 0$},\\
	\frac{\zeta(1+2it)}{\zeta(\frac{3}{2}+it)}+\frac{2^{\frac{1}{2}+it} \sin(\frac{\pi}{4}+\frac{i\pi t}{2})}{\pi^{it}} \frac{\zeta(it)}{\zeta(\frac{3}{2}-it)} \Gamma(it)(x^{2}-4)^{-it} & \text{if $t \ne 0$}.
	\end{dcases}
\end{equation}
The problem of approximating moments of Zagier $L$-series, thought of as a multiple Dirichlet series, is pioneered by Kuznetsov~\cite{Kuznetsov1978-2,Kuznetsov1978}. In particular, he employs the Kuznetsov formula~\cite{Kuznetsov1978} to write averages of Zagier $L$-series in terms of sums of Kloosterman sums.

We now record a result of Balkanova--Frolenkov--Risager~\cite[Theorem~1.2]{BalkanovaFrolenkovRisager2022} in a more well-suited form.
\begin{lemma}\label{lem:BFR}
Keep the notation as above. Then for any $\frac{5}{8} \leq \delta < \frac{3}{4}$ and $\epsilon > 0$, the bound
\begin{equation}\label{eq:delta}
\mathcal{E}_{\Gamma}(x) \ll_{\epsilon} x^{\delta+\epsilon}
\end{equation}
is equivalent to an asymptotic
\begin{equation}\label{eq:asymptotic}
\sum_{3 \leq n \leq X} L \left(\frac{1}{2}+it, n^{2}-4 \right) = \int_{3}^{X} m_{t}(x) dx+O_{\epsilon}(X^{2(2\delta-1)+\epsilon})
\end{equation}
uniformly for $|t| \ll X^{\epsilon}$.
\end{lemma}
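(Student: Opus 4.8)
The substance of Lemma~\ref{lem:BFR} is \cite[Theorem~1.2]{BalkanovaFrolenkovRisager2022}, so the proof amounts to reconciling their normalisation with \eqref{eq:asymptotic}, and I would organise this as follows. First, recall their dictionary: combining the spectral explicit formula of Selberg--Iwaniec \cite[Lemma~1]{Iwaniec1984} with the Kuznetsov--Bykovski\u{\i} formula \cite{Bykovskii1994,Kuznetsov1978} and Dirichlet's class number formula, a power saving $\mathcal{E}_{\Gamma}(x)\ll x^{\delta+\epsilon}$ is converted into a power saving for the average $\sum_{n} L(\tfrac{1}{2}+it,n^{2}-4)$ over the relevant dyadic range of $n$, and conversely. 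The conversion rests on the correspondence $\mathrm{N}(P)\asymp n^{2}$ between a primitive hyperbolic class of trace $n$ and the geodesic length, the weight attached to $n$ by the class number formula, and the choice of cutoff $T\asymp x^{1-\delta}$ in \cite[Lemma~1]{Iwaniec1984}; I would quote this passage wholesale from \cite{BalkanovaFrolenkovRisager2022} rather than reprove it.

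Second, I would make the main term explicit. By \cite[Theorem~1.3]{BalkanovaFrolenkov2019-3} the polar contribution to $\sum_{3\le n\le X}L(\tfrac{1}{2}+it,n^{2}-4)$ is exactly $\int_{3}^{X}m_{t}(x)\,dx$ with $m_{t}$ as in \eqref{eq:density-function}; reading off \eqref{eq:density-function}, this integral is $\asymp X\log X$ when $t=0$ and of size $X^{1+o(1)}$ (and $\gg X$) when $t\neq 0$ with $|t|\ll X^{\epsilon}$. Hence it dominates the claimed error $X^{2(2\delta-1)+\epsilon}$ precisely when $2(2\delta-1)<1$, i.e. $\delta<\tfrac{3}{4}$, which explains the upper constraint on $\delta$; the lower constraint $\delta\ge\tfrac{5}{8}$ is the range in which the passage back to $\Psi_{\Gamma}$ through the two explicit formulae no longer carries an intrinsic term larger than $x^{\delta}$, so that \cite[Theorem~1.2]{BalkanovaFrolenkovRisager2022} is a genuine two-sided equivalence there (this intrinsic floor is precisely what surfaces as the conditional exponent in Corollary~\ref{cor:main2}). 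If \cite[Theorem~1.2]{BalkanovaFrolenkovRisager2022} is phrased with a smooth cutoff, passing to the sharp one in \eqref{eq:asymptotic} costs only $O(X^{\epsilon})$ after optimising the smoothing scale, since the summand is $\ll X^{\epsilon}$ on average, which is harmless as $\delta<\tfrac{3}{4}$.

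Third, I would check the uniformity $|t|\ll X^{\epsilon}$: the factor $\Gamma(it)$, the zeta quotients in \eqref{eq:density-function}, and the archimedean data in the approximate functional equation for $L(\tfrac{1}{2}+it,n^{2}-4)$ all grow at most polynomially in $1+|t|$, and since $n^{2}-4$ is generally not fundamental one must first invoke \eqref{eq:T} to write $L(\tfrac{1}{2}+it,n^{2}-4)=\ell^{-it}T_{\ell}^{(D)}(\tfrac{1}{2}+it)L(\tfrac{1}{2}+it,\chi_{D})$ and bound the finite factor $T_{\ell}^{(D)}$, which is $\ll (n(1+|t|))^{\epsilon}$ on average over $n$. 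All of these are absorbed into $X^{\epsilon}$ once $|t|\ll X^{\epsilon}$.

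The step I expect to be the real obstacle is the two-sidedness in the first step, together with keeping the two error exponents in lockstep through it. The direction \eqref{eq:asymptotic}$\Rightarrow$\eqref{eq:delta} requires inserting the asymptotic into \cite[Lemma~1]{Iwaniec1984}, decomposing the spectral sum $\sum_{|t_{j}|\le T}x^{it_{j}}$ dyadically in $t$, and controlling the accumulated error uniformly in the dyadic parameter; the reverse direction runs the same machinery backwards through the Kuznetsov--Bykovski\u{\i} formula. The bookkeeping that $O(X^{2(2\delta-1)+\epsilon})$ is exactly what matches $x^{\delta+\epsilon}$ under the change of variables $x\asymp X^{2}$, the class number weighting, and the choice $T\asymp x^{1-\delta}$ --- rather than something weaker --- is where all the care is required, but it is carried out in \cite{BalkanovaFrolenkovRisager2022}, so in the write-up I would cite it and only spell out the cosmetic reformulation above.
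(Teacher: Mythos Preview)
Your proposal is correct and matches the paper's own treatment: the paper does not give a self-contained proof of Lemma~\ref{lem:BFR} but simply records that \cite[Theorem~1.2]{BalkanovaFrolenkovRisager2022} establishes the implication \eqref{eq:asymptotic}$\Rightarrow$\eqref{eq:delta} and remarks that the converse ``is also valid via comparable considerations.'' Your write-up is considerably more detailed than the paper's one-line deferral, but the approach---cite \cite{BalkanovaFrolenkovRisager2022} for the substance and only address the cosmetic reconciliation---is the same.
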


In practice,~\cite[Theorem~1.2]{BalkanovaFrolenkovRisager2022} shows that~\eqref{eq:asymptotic} implies~\eqref{eq:delta}, but the converse claim is also valid via comparable considerations. Moreover, Balkanova--Frolenkov--Risager~\cite[Theorem~1.3]{BalkanovaFrolenkovRisager2022} established that the exponent $\frac{1}{2}+\epsilon$ in~\eqref{eq:asymptotic} is optimal, which is commensurate with the conjectural exponent $\delta = \frac{5}{8}$ in~\eqref{eq:delta}, the limit of the present technology. Henceforth, we leave the quantity $\delta$ unspecified until the end of the proof of Theorem~\ref{main}.

Direct corollaries of Lemma~\ref{lem:BFR} include a nontrivial bound for character sums of interest.
\begin{lemma}\label{lem:substitute}
Let $1 \leq A \leq |B|$ and $R \geq 1$. Then we have for some fixed $\frac{5}{8} \leq \delta < \frac{3}{4}$ in~\eqref{eq:delta} and for any $\epsilon > 0$ that
\begin{equation}\label{eq:nontrivial}
\sum_{R < r \leq 2R} \sum_{B < a \leq A+B} \left(\frac{a^{2}-4}{r} \right) \ll_{\epsilon} (A^{1+\epsilon}+B^{2(2\delta-1)+\epsilon}) R^{\frac{1}{2}}.
\end{equation}
\end{lemma}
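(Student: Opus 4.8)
The plan is to recognise the inner sum over $r$ as, essentially, a partial sum of the Dirichlet coefficients of a Zagier $L$-series, so that the averaging over $a$ is governed by the asymptotic~\eqref{eq:asymptotic} of Lemma~\ref{lem:BFR}. For $a \geq 3$ put $\delta = \delta(a) \coloneqq a^{2}-4$; this is a discriminant that is never a perfect square, with fundamental part $D = D(a)$ and square part $\ell = \ell(a)$, so $\delta = D\ell^{2}$. First I would note that quadratic reciprocity gives $(\frac{a^{2}-4}{r}) = \chi_{D}(r) \mathbf{1}_{(r, \ell) = 1}$, whence $\sum_{r \geq 1} (\frac{a^{2}-4}{r}) r^{-s} = L(s, \chi_{D}) \prod_{p \mid \ell,\, p \nmid D} (1-\chi_{D}(p) p^{-s})$, while Zagier's identity~\eqref{eq:T} yields $\sum_{q \geq 1} \lambda_{q}(\delta) q^{-s} = L(s, \delta) = \ell^{1/2-s} T_{\ell}^{(D)}(s) L(s, \chi_{D})$. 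Comparing the two, $\sum_{r} (\frac{a^{2}-4}{r}) r^{-s}$ equals $L(s, \delta)$ times a finite Euler product supported on divisors of $\ell^{2}$ and of polynomial size in $\ell$; since $\ell^{2} \leq \delta < (A+B)^{2}$ one can peel these auxiliary divisors off, at the expense of introducing $O(\ell(a)^{\epsilon})$ dilated copies of the range $R < r \leq 2R$, and thereby reduce matters to bounding $\sum_{R < r \leq 2R} \sum_{B < a \leq A+B} \lambda_{r}(a^{2}-4)$ and structurally similar sums over dilated $r$-ranges — that is, the same double sum with the Kronecker symbol replaced by the genuine Zagier coefficient (for $\ell = 1$ the two coincide, since $(\frac{D}{r}) = \lambda_{r}(D)$ when $D$ is fundamental, so this correction is concentrated on the $a$ with $a^{2}-4$ not fundamental).

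Next, since $\delta(a)$ is never a perfect square, $L(s, \delta(a))$ is entire, so Perron's formula gives $\sum_{R < r \leq 2R} \lambda_{r}(a^{2}-4) = \frac{1}{2\pi i} \int_{(\sigma_{0})} L(s, a^{2}-4) \frac{(2R)^{s}-R^{s}}{s} \, ds$ for $\sigma_{0} > 1$, and I would move the contour to $\Re(s) = \frac{1}{2}$ without crossing a pole. Interchanging the finite $a$-sum with the integral and inserting~\eqref{eq:asymptotic} at $X = A+B$ and at $X = B$ on the line $\Re(s) = \frac{1}{2}$ gives
\[
\sum_{B < a \leq A+B} L \Bigl(\tfrac{1}{2}+it, a^{2}-4 \Bigr) = \int_{B}^{A+B} m_{t}(x) \, dx + O_{\epsilon} \bigl((A+B)^{2(2\delta-1)+\epsilon} \bigr), \qquad |t| \ll (A+B)^{\epsilon}.
\]
Since the polar parts of the two summands defining $m_{t}$ cancel at $t = 0$, one has $m_{t}(x) \ll \log x$ uniformly for bounded $t$, so the main term contributes $\int_{B}^{A+B} m_{t}(x) \, dx \ll A \log(A+B) \ll A^{1+\epsilon}$ (using $A \leq |B|$) and the error contributes $\ll (A+B)^{2(2\delta-1)+\epsilon} \ll B^{2(2\delta-1)+\epsilon}$. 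The kernel $\frac{(2R)^{s}-R^{s}}{s}$ has size $\ll R^{1/2}$ on $\Re(s) = \frac{1}{2}$ and, after a harmless smoothing of the sharp cut-off in $r$, is concentrated on $|t| \ll (AB)^{\epsilon}$; carrying out the $t$-integration then yields $\sum_{B < a \leq A+B} \sum_{R < r \leq 2R} \lambda_{r}(a^{2}-4) \ll_{\epsilon} (A^{1+\epsilon}+B^{2(2\delta-1)+\epsilon}) R^{1/2}$, and combining with the reduction of the first paragraph proves~\eqref{eq:nontrivial}.

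The hard part will be the tension, in the second step, between the length of the $r$-sum and the admissible range $|t| \ll (A+B)^{\epsilon}$ in~\eqref{eq:asymptotic}: detecting the condition $R < r \leq 2R$ sharply through Perron's formula a priori requires control of the truncated integral up to a height growing with $R$, so one must absorb the truncation error and the horizontal contour segments into admissible errors while smoothing the cut-off just enough to confine the line integral to a narrow $t$-window — this is precisely the mechanism that makes the $r$-aspect cost only the factor $R^{1/2}$ rather than a larger power of $R$. A secondary, more clerical, difficulty is the uniform treatment of the finite Euler factors connecting $(\frac{a^{2}-4}{\cdot})$ with $\lambda_{\cdot}(a^{2}-4)$, in particular of the $a$ for which $a^{2}-4$ fails to be fundamental, which form a positive proportion of all $a \leq A+B$ and for which one presumably wants~\eqref{eq:asymptotic} along arithmetic progressions.
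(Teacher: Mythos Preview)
Your strategy is the paper's: pass to the critical line by Mellin inversion in $r$ and then feed the $a$-average into~\eqref{eq:asymptotic}. The one substantive technical difference is the cut-off in $r$. Instead of Perron's kernel $((2R)^{s}-R^{s})/s$, the paper uses the exponential weight $e^{-r/R}$ of~\eqref{eq:Fourier-analysis}--\eqref{eq:integral}, whose Mellin transform $\Gamma(s)R^{s}$ decays like $e^{-\pi|t|/2}$ by Stirling; this localises the $t$-integral to $|t|\ll\log(|B|R)$ for free, so the ``hard part'' you flag --- reconciling a sharp $r$-cut-off with the narrow window $|t|\ll X^{\epsilon}$ of~\eqref{eq:asymptotic} --- simply does not arise, and the final estimate follows by bounding $\int_{B}^{A+B}m_{t}(x)\,dx\ll A^{1+\epsilon}$ and integrating trivially against $|\Gamma(\tfrac12+it)|R^{1/2}$. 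For your secondary worry about non-fundamental $a^{2}-4$, the paper just cites~\eqref{eq:T} together with the fact that $T_{\ell}^{(D)}(s)$ obeys an analogue of the Riemann hypothesis to absorb the passage between the Kronecker-symbol Dirichlet series and the Zagier $L$-series on the critical line; no divisor-peeling and no arithmetic-progression variant of~\eqref{eq:asymptotic} is needed.
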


\begin{remark}
We may obtain an asymptotic for averages of $L(\sigma+it, n^{2}-4)$ for $\frac{1}{2} \leq \sigma \leq 1$ in the spirit of Lemma~\ref{lem:zero-density} via a suitable form of the Phragm\'{e}n--Lindel\"{o}f principle. This would lead to a bound on par with the quality of~\eqref{eq:nontrivial} in some nontrivial ranges of $\delta$. Moreover, at the current state of zero density estimates discussed in Section~\ref{sect:zero-density}, Lemma~\ref{lem:substitute} strengthens Lemma~\ref{lem:zero-density} (that we choose to abandon) when $\sigma$ is near the right edge of the critical strip.
\end{remark}

\begin{proof}
By~\eqref{eq:T}, we henceforth shift our attention from Zagier $L$-series to quadratic Dirichlet $L$-functions since $T_{\ell}^{(D)}(s)$ obeys an analogue of the Riemann hypothesis. In~\eqref{eq:Fourier-analysis} and~\eqref{eq:integral}, we choose $\sigma = \frac{1}{2}$, and execute the inner sum over $a$ in~\eqref{eq:nontrivial} via Lemma~\ref{lem:BFR}. This yields
\begin{equation*}
\sum_{R < r \leq 2R} \sum_{B < a \leq A+B} \left(\frac{a^{2}-4}{r} \right) \ll_{\epsilon} \left|\int_{-\infty}^{\infty} \Gamma \left(\frac{1}{2}+it \right) R^{\frac{1}{2}+it} \int_{B}^{A+B} m_{t}(x) dx dt \right|+B^{2(2\delta-1)+\epsilon} R^{\frac{1}{2}}.
\end{equation*}
The definition of the density function~\eqref{eq:density-function} ensures that the integration over $x \in (B, A+B]$ is bounded by $A^{1+\epsilon}$, where the epsilon loss stems from an additional logarithm when~$t = 0$. Estimating trivially the rest via Stirling's approximation completes the proof of Lemma~\ref{lem:substitute}.
\end{proof}

We direct the reader to a survey paper of Iwaniec~\cite[Page~189]{Iwaniec1984-2} that explains, among other formulations, the best possible cancellations (up to $\epsilon$) in bilinear forms as in Lemma~\ref{lem:substitute}, and predicts that
\begin{equation}\label{eq:optimal}
\sum_{R < r \leq 2R} \sum_{B < a \leq A+B} \left(\frac{a^{2}-4}{r} \right) \ll_{\epsilon} (A+R)(ABR)^{\epsilon}.
\end{equation}
He claims that~\eqref{eq:optimal} would yield the conjectural exponent $\frac{1}{2}+\epsilon$ in the prime geodesic theorem. This is heuristically as strong as the twisted Linnik--Selberg conjecture formulated in~\cite{Iwaniec1984,Iwaniec1984-2,BalkanovaFrolenkovRisager2022}; cf.~\cite[Conjecture~1.1]{BrowningKumaraswamySteiner2019}. It is desirable to establish an analogue of~\eqref{eq:optimal} conditionally~on the quasi generalised Riemann hypothesis involving $\frac{1}{2} \leq \sigma \leq 1$, which has a possibility of improving further upon Theorem~\ref{main} thanks to an independence of $B$ (which is quite large in our scenario) as well as an absence of the cumbersome constraint $\frac{5}{8} \leq \delta < \frac{3}{4}$.

\section{Zero Density Estimates}\label{sect:zero-density}
The proof of Theorem~\ref{main} also requires strong zero density estimates for quadratic families of Dirichlet $L$-functions, which often play a key role in the statistical theory of $L$-functions.
\begin{lemma}[{Heath-Brown~\cite[Theorem~3]{HeathBrown1995}}]\label{lem:Heath-Brown}
For $0 \leq \sigma \leq 1$ and $T \geq 1$, we define
\begin{equation*}
N(\sigma, T, \chi) \coloneqq \#\{\rho = \beta+i\gamma \in \mathbb{C}: L(\rho, \chi) = 0, \, \sigma \leq \beta \leq 1, \, |\gamma| \leq T \},
\end{equation*}
where the zeros are counted with multiplicity. Then we have for any $\frac{1}{2} \leq \sigma \leq 1$ and $\epsilon > 0$ that
\begin{equation}\label{eq:Heath-Brown}
\ \sideset{}{^{\flat}} \sum_{|D| \leq Q} N(\sigma, T, \chi_{D}) \ll_{\epsilon} Q^{\frac{3(1-\sigma)}{2-\sigma}+\epsilon} T^{\frac{3-2\sigma}{2-\sigma}+\epsilon},
\end{equation}
where $\flat$ denotes that the sum runs through fundamental discriminants $D$ of norm up to~$Q$.
\end{lemma}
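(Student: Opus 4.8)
The plan is to follow Heath-Brown's derivation of this estimate (Theorem~3 of \cite{HeathBrown1995}) from his quadratic large sieve, via the zero-detection method adapted to the quadratic family. Write $\mathcal{Z}$ for the set of pairs $(D,\rho)$ with $D$ a fundamental discriminant, $|D|\leq Q$, and $\rho=\beta+i\gamma$ a zero of $L(s,\chi_D)$ having $\beta\geq\sigma$, $|\gamma|\leq T$. Since every interval $[\tau,\tau+1]$ carries $O(\log(|D|T))$ ordinates of zeros of $L(s,\chi_D)$ (by the usual argument-principle count), at the cost of a factor $O(\log(QT))$ we may thin $\mathcal{Z}$ so that the ordinates attached to a fixed $D$ are $1$-spaced; it then suffices to bound the cardinality $R$ of the thinned set by $Q^{3(1-\sigma)/(2-\sigma)+\epsilon}T^{(3-2\sigma)/(2-\sigma)+\epsilon}$.

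Second, I would deploy a zero-detecting polynomial. Fix a mollifier length $X$ and set $M_X(s,\chi_D)=\sum_{m\leq X}\mu(m)\chi_D(m)m^{-s}$, so that $L(s,\chi_D)M_X(s,\chi_D)=1+\sum_{m>X}\lambda_m\chi_D(m)m^{-s}$ with $\lambda_m\ll_\epsilon m^{\epsilon}$. Inserting a smooth majorant $e^{-m/X^{2}}$ and moving a Mellin contour, each relevant pair $(D,\rho)$ obeys one of two alternatives: either the short mollified tail satisfies
\begin{equation*}
\Bigl|\sum_{m>X}\lambda_m\,\chi_D(m)\,m^{-\rho}\,e^{-m/X^{2}}\Bigr|\gg 1,
\end{equation*}
which is effectively a Dirichlet polynomial of length $\leq X^{2+\epsilon}$; or, after pushing the contour past the critical line and invoking the approximate functional equation for $L(s,\chi_D)$ (analytic conductor $\mathfrak{q}=\mathfrak{q}(D,\gamma)\asymp|D|(1+|\gamma|)$), a reflected sum on the critical line satisfies
\begin{equation*}
\Bigl|\sum_{n\sim N}a_n\,\chi_D(n)\,n^{-\frac12-i\gamma}\Bigr|\gg \mathfrak{q}^{\sigma-\frac12}
\end{equation*}
for some dyadic $N\ll\mathfrak{q}^{1/2+\epsilon}$, with $a_n\ll_\epsilon n^{\epsilon}$. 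Cutting each polynomial into $O(\log QT)$ dyadic blocks, the task reduces to counting, for each dyadic $N$, the relevant pairs for which a single bilinear form $\sum_{n\sim N}a_n\chi_D(n)n^{-\alpha-i\gamma}$ (with $\alpha=\sigma$ in the first alternative, $\alpha=\tfrac12$ in the second) exceeds an explicit threshold depending on $N$, $Q$, $T$, $\sigma$.

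Third, I would bound the number of such pairs by a hybrid large sieve. For the average over $D$ I invoke Heath-Brown's quadratic large sieve (the central theorem of \cite{HeathBrown1995}), namely $\sum_{|D|\leq Q}^{\flat}\bigl|\sum_{n\sim N}b_n\chi_D(n)\bigr|^{2}\ll_\epsilon(QN)^{\epsilon}(Q+N)\sum_{n\sim N}|b_n|^{2}$, and for the average over the $1$-spaced ordinates $\gamma_j\in[-T,T]$ the classical large sieve $\sum_j\bigl|\sum_{n\sim N}c_n n^{-i\gamma_j}\bigr|^{2}\ll(N+T)\sum_{n\sim N}|c_n|^{2}$; the two are combined in the joint index $(D,\gamma_j)$ by duality (equivalently, a Cauchy--Schwarz step followed by a Hal\'{a}sz--Montgomery rearrangement) into the mean value estimate that governs the final exponents. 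Turning the pointwise lower bounds of the dichotomy into an upper bound for $R$ by this estimate, I would then optimise the mollifier length $X$ against the reflected length $N\ll(QT)^{1/2}$ and the two thresholds: the short-sum term prefers $X$ small, whereas the reflected-sum term carries the conductor factor $\mathfrak{q}^{1-2\sigma}$, and balancing all dyadic contributions yields exactly the exponents $\tfrac{3(1-\sigma)}{2-\sigma}$ of $Q$ and $\tfrac{3-2\sigma}{2-\sigma}$ of $T$ in~\eqref{eq:Heath-Brown}.

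The main obstacle is the hybrid bookkeeping. One must carry the analytic conductor $\mathfrak{q}\asymp|D|(1+|\gamma|)$ faithfully through the functional equation and propagate it through a large sieve that is genuinely two-variable in $(D,\gamma)$: applying the quadratic large sieve in $D$ alone while worst-casing in $\gamma$, or the classical large sieve in $\gamma$ alone, loses in the complementary aspect and destroys the clean shape $\tfrac{3-2\sigma}{2-\sigma}$. A related subtlety is that the ``$Q+N$'' term in the quadratic large sieve beats the trivial count only for $N\lesssim Q$, so both the mollifier length and the reflected length must be held in check; this couples the $Q$- and $T$-exponents, and in particular makes~\eqref{eq:Heath-Brown} sharpest for $\sigma$ close to $1$, which is precisely the regime in which it enters the proof of Theorem~\ref{main}.
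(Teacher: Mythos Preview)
The paper does not supply its own proof of this lemma: it is quoted verbatim as Heath-Brown's Theorem~3 in~\cite{HeathBrown1995}, with only the one-line remark that it is a consequence of his quadratic large sieve. There is therefore nothing in the paper to compare your argument against.

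That said, your outline is a faithful sketch of how Heath-Brown himself derives the estimate. The ingredients you name---thinning to well-spaced zeros, a mollified zero-detector, the dichotomy between a short Dirichlet polynomial and a reflected sum of length $\ll (QT)^{1/2}$ coming from the functional equation, and then the quadratic large sieve $\sum_{|D|\leq Q}^{\flat}\bigl|\sum_{n\sim N}b_n\chi_D(n)\bigr|^{2}\ll_\epsilon (QN)^{\epsilon}(Q+N)\|b\|_2^2$ combined via Hal\'asz--Montgomery with the classical mean value in the $t$-aspect---are exactly the machinery of~\cite[\S\S10--11]{HeathBrown1995}. Your identification of the hybrid bookkeeping as the delicate point is also accurate: the coupling of the $Q$- and $T$-exponents through the conductor $\mathfrak{q}\asymp |D|(1+|\gamma|)$ is what produces the asymmetric pair $\tfrac{3(1-\sigma)}{2-\sigma}$ and $\tfrac{3-2\sigma}{2-\sigma}$ rather than a single exponent of $QT$. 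The only caveat is that your sketch stops short of the actual optimisation over the mollifier length and the dyadic ranges, which in Heath-Brown's paper occupies several pages of case analysis; but as a plan there is no gap.
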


Lemma~\ref{lem:Heath-Brown} is a consequence of quadratic large sieve~\cite[Theorem~3]{HeathBrown1995} and is strong in the conductor aspect, while weaker in the archimedean aspect since Jutila~\cite[Theorem~2]{Jutila1975} derives an upper bound of $(QT)^{\frac{7-6\sigma}{2(3-2\sigma)}+\epsilon}$. The sharpest results hitherto known feature~\cite[Theorem~3]{HeathBrown1995} and~\cite[Theorem~1.3]{Corrigan2024} in the conductor aspect and in the archimedean aspect, respectively. The folklore conjecture of Jutila~\cite{Jutila1972,Jutila1975,Jutila1976,Jutila1977-2}, which remains out of reach at the current state of knowledge, asserts that
\begin{equation}\label{eq:folklore}
\ \sideset{}{^{\flat}} \sum_{|D| \leq Q} N(\sigma, T, \chi_{D}) \ll_{\epsilon} (QT)^{\frac{3}{2}(1-\sigma)+\epsilon}.
\end{equation}
Note that there is a subtle difference in its formulation in the sense that he conjectured the exponent $\frac{3}{2}-\sigma+\epsilon$. The reason for its poorer quality when $\sigma$ is close to $1$ consists in the fact that he considers the left-hand side of~\eqref{eq:folklore} without excluding the contribution of imprimitive quadratic characters, and counts the zeros possibly several times. If one confines oneself to fundamental discriminants, then the corresponding exponent boils down to $\frac{3}{2}(1-\sigma)+\epsilon$ that shrinks as $\sigma$ approaches $1$. In this regard, Lemma~\ref{lem:Heath-Brown} also enjoys such a requisite shrinkage with respect to $\sigma$, and shows quantitatively that the zeros near the edge must be sparse. 

Many fundamental consequences in analytic number theory employ Lemma~\ref{lem:Heath-Brown} as a proxy for the generalised Lindel\"{o}f hypothesis for quadratic Dirichlet $L$-functions or Zagier $L$-series, without recourse to unproven progress concerning the quasi generalised Riemann hypothesis. As an immediate manifestation of this philosophy, we notice that Corollary~\ref{cor:main} was previously known conditionally on the generalised Lindel\"{o}f hypothesis; hence our unconditional result is attributed essentially to the sparsity of the zeros near the edge, exhibited in Lemma~\ref{lem:Heath-Brown}.

As shall be seen in the subsequent sections, the quality of~\eqref{eq:Heath-Brown} in the archimedean aspect~is irrelevant for our purposes. For prior breakthroughs towards density hypotheses (not limited to quadratic families), we refer to~\cite{HeathBrown1979-2,HeathBrown1979-3,HeathBrown1979-4,Huxley1976,Jutila1975,Jutila1977,Montgomery1971}.

\section{Mean Value Estimates}\label{sect:mean}
In the process of applying the Kuznetsov formula and then treating sums of Kloosterman sums nontrivially, Iwaniec~\cite{Iwaniec1984} and Cai~\cite{Cai2002} expressed the Kloosterman sum $S(n, n; c)$ in terms of a sum of additive characters weighted by certain counting functions, and eschewed the mere usage of the Weil bound, namely
\begin{equation}\label{eq:rho}
S(n, n; c) \coloneqq \ \sideset{}{^{\ast}} \sum_{a \tpmod{c}} e \left(\frac{an+\overline{a} n}{c} \right) = \sum_{a \tpmod{c}} \rho(c, a) e \left(\frac{an}{c} \right),
\end{equation}
where the asterisk denotes summation restricted to reduced residue classes, and $e(z) \coloneqq e^{2\pi iz}$. Furthermore, $\rho(c, a)$ stands for the number of solutions $d \tpmod{c}$ of the congruence
\begin{equation*}
d^{2}-ad+1 \equiv 0 \tpmod{c}.
\end{equation*}
If $c \in \N$ is squarefree, then $\rho(c, a)$ is in conjunction with the counting function $\rho_{c}(a^{2}-4)$~given by~\eqref{eq:def} up to a scalar. Note that the expression on the right-hand side of~\eqref{eq:rho} should be thought of as a reverse version of~\cite[Lemma~2.3]{SoundararajanYoung2013}. It is important that the Kloosterman sum in question deals with the case of the same argument $m = n$. This possesses some nice structural properties via an observation in the work of Kuznetsov~\cite{Kuznetsov1978}, lending itself to purely arithmetic approaches in terms of real quadratic fields, as explicated in Section~\ref{sect:averages}.

Given $1 \leq A \leq |B|$ and $C \geq 1$, we define the mean values (see the last display on~\cite[Page~326]{IwaniecSzmidt1985} for its first appearance)
\begin{equation*}
\mathcal{F}(A, B, C) \coloneqq \sum_{B < a \leq A+B} \sum_{c \leq C} \rho(c, a),
\end{equation*}
where $B$ is not necessarily positive. Because
\begin{equation*}
\sum_{a \tpmod{c}} \rho(c, a) = \varphi(c),
\end{equation*}
where $\varphi(c)$ denotes Euler's totient function, we deduce
\begin{equation}\label{eq:trivial}
\mathcal{F}(A, B, C) = \sum_{c \leq C} \left(\frac{A}{c}+O(1) \right) \varphi(c) = \frac{6}{\pi^{2}} AC+O(A+C^{2}).
\end{equation}
This is identical to~\cite[Equation~(42)]{Iwaniec1984} and insufficient for our purposes as the exponent of $C$ is too large. The following asymptotic gives a substantial improvement to both~\cite[Theorem~3]{Iwaniec1984} and~\cite[Lemma~7]{Cai2002}, and constitutes the core of this paper.
\begin{lemma}\label{lem:3}
Let $1 \leq A \leq |B|$ and $C \geq 1$. Suppose that~\eqref{eq:sums-of-quadratic-characters} holds for some $\frac{1}{2} \leq \alpha \leq 1$~and $0 \leq \beta \leq \frac{1}{2}$. Then we have for some fixed $\frac{5}{8} \leq \delta < \frac{3}{4}$ and for any $\frac{1}{2} \leq \sigma \leq 1$, $\theta \leq \frac{3+2\beta-(3+\beta)\sigma}{(3-2\alpha)(2-\sigma)}$, and $\epsilon > 0$ that
\begin{equation}\label{eq:delta-included}
\mathcal{F}(A, B, C) = \frac{6}{\pi^{2}} AC+O_{\epsilon}((|B|^{\frac{2(3+2\beta-(3+\beta)\sigma)}{(3-2\alpha)(2-\sigma)}} C+|B|^{2(2\delta-1)-\frac{2(3+2\beta-(3+\beta)\sigma)}{(3-2\alpha)(2-\sigma)}} C+AC^{\frac{1}{2}})(|B|C)^{\epsilon}).
\end{equation}
\end{lemma}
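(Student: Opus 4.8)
The plan is to start from the identity $\rho(c,a) = \lambda_{c}(a^{2}-4) + (\text{correction terms})$ for squarefree $c$ --- more precisely, to express $\rho(c,a)$ via the multiplicative function $\rho_{c}(a^{2}-4)$ of~\eqref{eq:def} and then pass to $\lambda_{c}(a^{2}-4)$ through the Möbius relation recorded just after~\eqref{eq:def}. Summing over $B < a \leq A+B$ and $c \leq C$, the main term $\frac{6}{\pi^{2}}AC$ should emerge from the $\delta = a^{2}-4$ ranging over non-square values together with the average of $\lambda_{c}$ over $c$, exactly as in~\cite[Equation~(42)]{Iwaniec1984} and~\eqref{eq:trivial}; the point is to do better on the error. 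I would dyadically decompose the $c$-sum into ranges $c \sim C'$ and, for each such range, open $\lambda_{c}(a^{2}-4)$ via~\eqref{eq:T} to rewrite the inner average over $a$ as a short average of quadratic Dirichlet $L$-functions $L(\tfrac12 + it, \chi_{D})$ with $D$ running over fundamental discriminants dividing $a^{2}-4$, picking up the auxiliary factors $T^{(D)}_{\ell}(s)$ which are harmless since they satisfy an analogue of the Riemann hypothesis.

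The crux is then a dichotomy on the conductor, executed exactly along the lines flagged in the introduction: split the relevant sum over moduli (equivalently, over the fundamental discriminants $D$ that arise) according to whether $L(s,\chi_{D})$ has a zero in the rectangle $[\sigma,1] \times [-\log x, \log x]$ for a suitable $x$ comparable to a power of $|B|$, or not. For the $D$ with \emph{no} zero in the box, Lemma~\ref{lem:zero-density} --- or, more efficiently in the range where $\sigma$ is near $1$, the bilinear bound~\eqref{eq:nontrivial} of Lemma~\ref{lem:substitute} with the fixed $\delta$ --- gives the character-sum saving; this is where the terms $|B|^{2(2\delta-1)+\epsilon}$ and $A^{1+\epsilon}$ in~\eqref{eq:delta-included} are born, after summing over $c \sim C'$ and reassembling the dyadic pieces into the $AC^{1/2}$ and $|B|^{2(2\delta-1)-\cdots}C$ contributions. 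For the $D$ that \emph{do} have a zero in the box, I would bound the inner character sum by the weaker but still nontrivial estimate~\eqref{eq:sums-of-quadratic-characters} from Lemma~\ref{lem:2} (hence the hypotheses $\tfrac12 \leq \alpha \leq 1$, $0 \leq \beta \leq \tfrac12$ on the pair $(\alpha,\beta)$), and control the number of such exceptional $D$ by the zero-density estimate~\eqref{eq:Heath-Brown} of Heath-Brown, Lemma~\ref{lem:Heath-Brown}, with $Q \asymp |B|^{2}$ (the natural size of $a^{2}-4$) and $T \asymp \log x$. Multiplying the count $Q^{3(1-\sigma)/(2-\sigma)+\epsilon}$ by the per-discriminant bound $x^{\alpha}|D|^{\beta+\epsilon}$ and optimising the exponent of $|B|$ against the power arising from the no-zero case produces precisely the exponent $\frac{2(3+2\beta-(3+\beta)\sigma)}{(3-2\alpha)(2-\sigma)}$ appearing in~\eqref{eq:delta-included}; the constraint $\theta \leq \frac{3+2\beta-(3+\beta)\sigma}{(3-2\alpha)(2-\sigma)}$ is exactly what guarantees that Lemma~\ref{lem:2} may legitimately be invoked (its range-of-validity restriction $x \geq |D|^{2\theta}$), and the requirement $\sigma \leq 1$ keeps the zero-density exponent from turning negative.

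The main obstacle I anticipate is twofold. First, the bookkeeping in passing from $\rho(c,a)$ to $\lambda_{c}(a^{2}-4)$ and then to $L(\tfrac12+it,\chi_{D})$ is delicate: one must handle the square values of $a^{2}-4$ (i.e.\ $\delta = 0$, which contributes $\zeta(2s-1)$ rather than an $L$-function) and the non-cubefree part of $c$ (the factors of $8$), track the divisor-type weights $T^{(D)}_{\ell}(s)$ and the $\ell^{1/2-s}$ in~\eqref{eq:T} uniformly, and ensure that the ``correction'' terms coming from the $q_{1}^{2}q_{2}q_{3} = q$ decomposition of $\lambda_{q}$ are genuinely lower order. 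Second --- and this is the real heart --- the zero-density/no-zero dichotomy must be \emph{balanced}: one is free to choose the splitting abscissa $\sigma$, and the claim is that there is a single choice (the ``hidden symmetry'' advertised in the introduction) for which the exceptional-$D$ bound from~\eqref{eq:Heath-Brown}$\times$Lemma~\ref{lem:2} and the typical-$D$ bound from~\eqref{eq:nontrivial} coincide up to $|B|^{\epsilon}$, yielding the clean exponent in~\eqref{eq:delta-included} as a function of $\sigma$; verifying that this optimisation is consistent with all the side constraints ($\tfrac12 \leq \sigma \leq 1$, the bound on $\theta$, and $A \leq |B|$) is where the argument could most easily break down, and it is the step I would check first on a scratch pad before committing to the write-up.
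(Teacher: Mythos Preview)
Your proposal correctly isolates the key innovation---the dichotomy over $a$ according to whether $L(s,\chi_{a^{2}-4})$ has a zero in the box, with Heath-Brown's density estimate controlling the exceptional set and Lemma~\ref{lem:substitute} handling the typical set---but the surrounding framework is not the one the paper uses, and your proposed route has a real gap.

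The paper does \emph{not} pass through $\lambda_{c}(a^{2}-4)$ and the Zagier $L$-series~\eqref{eq:Zagier}. Instead it follows Iwaniec's decomposition $c=k\ell$ (odd squarefree $\times$ squarefull-supported) and the divisor identity $\rho(k,a)=\sum_{r\mid k}\bigl(\tfrac{a^{2}-4}{r}\bigr)$, leading to the triple sum over $\ell,r,s$ with $\ell rs\le C$. A threshold parameter $R$ is introduced and one splits $\mathcal{F}=\mathcal{F}_{0}+\mathcal{F}_{\infty}$ according to $\ell r\le R$ or $\ell r>R$. The main term $\tfrac{6}{\pi^{2}}AC$ and the error $CR^{1/2}+AC^{1/2}$ come entirely from $\mathcal{F}_{0}$ via Iwaniec's~\cite[Equation~(44)]{Iwaniec1984}; nothing of the sort emerges from ``the average of $\lambda_{c}$ over $c$'' as you suggest. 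Only $\mathcal{F}_{\infty}$ is then split by the zero/no-zero dichotomy: for $a\in\mathcal{M}$ one sums over $r$ dyadically and invokes~\eqref{eq:sums-of-quadratic-characters} (this is where the condition $R\ge|B|^{4\theta}$, hence your $\theta$-constraint, enters), yielding $\ll|B|^{2\beta+6(1-\sigma)/(2-\sigma)}CR^{\alpha-1}$; for $a\in\overline{\mathcal{M}}$ one applies Lemma~\ref{lem:substitute} to get $\ll|B|^{2(2\delta-1)}CR^{-1/2}$.

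This matters for the optimisation, which you have misidentified. The balance is \emph{not} between the zero case and the no-zero case; both of those carry negative powers of $R$ and only improve as $R$ grows. The balance is between the $CR^{1/2}$ from $\mathcal{F}_{0}$ (increasing in $R$) and the $|B|^{\cdots}CR^{\alpha-1}$ from $\mathcal{F}^{\mathcal{M}}_{\infty}$ (decreasing in $R$), which fixes $R=|B|^{4(3+2\beta-(3+\beta)\sigma)/((3-2\alpha)(2-\sigma))}$. Substituting this $R$ back into the no-zero bound then produces the second term in~\eqref{eq:delta-included}. Finally, $\sigma$ is \emph{not} optimised inside this lemma---it remains a free parameter in the statement, and the ``hidden symmetry'' you allude to is exploited only later in Section~\ref{sect:proof}. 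Without the $R$-mechanism and the Iwaniec $\mathcal{F}_{0}$ computation, your sketch has no clear source for either the main term or the $AC^{1/2}$ error, and the exponent arithmetic will not close.
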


\begin{proof}
Write $c = k \ell$ where $k$ is a squarefree odd integer and $4\ell$ is a squarefull integer coprime to $k$. Then the multiplicativity of $\rho(c, a)$ with respect to $c$ implies
\begin{equation*}
\rho(c, a) = \rho(k, a) \rho(\ell, a).
\end{equation*}
Note that the congruence $d^{2}-ad+1 \equiv 0 \tpmod{k}$ is equivalent to $x^{2} \equiv a^{2}-4 \tpmod{k}$ in $x \tpmod{k}$, and the number of incongruent solutions of the latter equals
\begin{equation*}
\rho(k, a) = \prod_{p \mid k} \left(1+\left(\frac{a^{2}-4}{p} \right) \right) = \sum_{r \mid k} \left(\frac{a^{2}-4}{r} \right).
\end{equation*}
Following the notation of Iwaniec~\cite[Page~154]{Iwaniec1984}, let $\mathscr{L}$ be the set of $\ell \in \N$ such that $p \mid \ell$ implies $p^{2} \mid \ell$ for all primes $p > 2$. Then
\begin{equation*}
\mathcal{F}(A, B, C) = \sum_{\substack{\ell rs \leq C \\ \ell \in \mathscr{L}, \, (rs, 4\ell) = 1}} \mu(rs)^{2} \sum_{B < a \leq A+B} \rho(\ell, a) \left(\frac{a^{2}-4}{r} \right) \eqqcolon \mathcal{F}_{0}(A, B, C)+\mathcal{F}_{\infty}(A, B, C),
\end{equation*}
where, for a parameter $R$ to be chosen later, $\mathcal{F}_{0}(A, B, C)$ (resp. $\mathcal{F}_{\infty}(A, B, C)$) denotes the summation restricted to $\ell r \leq R$ (resp. $\ell r > R$). Hence, depending on the size of $A$ and $R$, we either execute the sum over $a$ via the Weil--Deligne bound for character sums with the quadratic polynomial $a^{2}-4$ (see~\cite[Theorem~2G]{Schmidt1976} and~\cite[Page~207]{Weil1948}), or execute the sum over $r$ via strong subconvex bounds as in Lemma~\ref{lem:2} including $0 \leq \theta < \frac{1}{4}$, which replaces the usage of the Burgess bound in the conductor aspect as in~\cite{Cai2002,Iwaniec1984}. Another key realisation is that by quadratic reciprocity, one can regard the Jacobi--Kronecker symbol $(\frac{p}{q})$ either as a character in $p \tpmod{q}$ or as a character in $q \tpmod{p}$.

For the first contribution $\mathcal{F}_{0}(A, B, C)$, Iwaniec~\cite[Equation (44)]{Iwaniec1984} proved that
\begin{equation}\label{eq:F-0}
\mathcal{F}_{0}(A, B, C) = \lambda AC+O_{\epsilon}((CR^{\frac{1}{2}}+ACR^{-\frac{1}{2}}+AC^{\frac{1}{2}})(|B|C)^{\epsilon})
\end{equation}
for an absolute and effectively computable constant $\lambda > 0$. We stress that any improvement of~\cite[Lemma~2*]{Iwaniec1984} would not lead to a stronger error term in~\eqref{eq:F-0} due to the occurrence of the sum over $\ell$. The problem now boils down to the estimation of $\mathcal{F}_{\infty}(A, B, C)$ to balance with the first term in the error term on the right-hand side of~\eqref{eq:F-0}. The second term is in turn absorbed into the other terms, and the third term corresponds to the last term in~\eqref{eq:delta-included}.

This is where the assumption~\eqref{eq:sums-of-quadratic-characters} comes into play. We apply a dyadic subdivision of the summation over $r$ in $\mathcal{F}_{\infty}(A, B, C)$ into intervals of the type $(R_{1}, R_{2}]$ with $\ell^{-1} R \leq R_{1} < R_{2} \leq 2R_{1}$. It is also advantageous to split the sum over $a$ according as $a \in \mathcal{M}$ or $a \in \overline{\mathcal{M}}$, where
\begin{equation*}
\mathcal{M} \coloneqq \{B < a \leq A+B: \text{$L(s, \chi_{a^{2}-4})$ has a zero in $[\sigma, 1] \times [-\log R, \log R]$} \},
\end{equation*}
and $\overline{\mathcal{M}}$ stands for the complement of $\mathcal{M}$. By Lemma~\ref{lem:Heath-Brown}, the cardinality of $\mathcal{M}$ is at most (cf.~\cite[Section~2.3]{BalogBiroCherubiniLaaksonen2022} and~\cite[Lemma~2.5]{Sarnak1985})
\begin{equation}\label{eq:card}
\mathrm{Card}(\mathcal{M}) \ll_{\epsilon} |B|^{\frac{6(1-\sigma)}{2-\sigma}+\epsilon}.
\end{equation}
If the restriction to $\mathcal{M}$ is indicated as a superscript by $\mathcal{F}^{\mathcal{M}}_{\infty}(A, B, C)$, then we attach in~\eqref{eq:sums-of-quadratic-characters} the condition that $r$ be squarefree and coprime to a given $q$ by the device on~\cite[Page~329]{IwaniecSzmidt1985} (cf.~\cite[Lemma~3*]{Iwaniec1984} and~\cite[Lemma~4]{Cai2002}), obtaining
\begin{align*}
\mathcal{F}^{\mathcal{M}}_{\infty}(A, B, C) &\ll_{\epsilon} \sum_{\substack{\ell \leq R \\ \ell \in \mathscr{L}}} \sum_{a \in \mathcal{M}} \rho(\ell, a) \sum_{R_{1} \geq \ell^{-1} R} \frac{C}{\ell R_{1}} R_{1}^{\alpha} |B|^{2\beta}(|B|C)^{\epsilon}\\
&\ll_{\epsilon} |B|^{2\beta} CR^{\alpha-1}(|B|C)^{\epsilon} \sum_{a \in \mathcal{M}} \sum_{\substack{\ell \leq R \\ \ell \in \mathscr{L}}} \rho(\ell, a) \ell^{-\alpha}
\end{align*}
provided $R \geq |B|^{4\theta}$, where we use the following asymptotic with $S = \frac{C}{\ell r}$ and $q = 4\ell r$:
\begin{equation*}
\sum_{\substack{s \leq S \\ (s, q) = 1}} \mu(s)^{2} = \frac{6}{\pi^{2}} \prod_{p \mid q} \left(1+\frac{1}{p} \right)^{-1} S+O(\tau(q) S^{\frac{1}{2}}).
\end{equation*}
In the second display on~\cite[Page~156]{Iwaniec1984}, Iwaniec estimates the sum over $\ell$ as $(|B|C)^{\epsilon}$ via elementary manipulations. From the assumption $\alpha \geq \frac{1}{2}$ along with partial summation,~our sum over $\ell$ is also bounded by $(|B|C)^{\epsilon}$, and hence~\eqref{eq:card} yields
\begin{equation}\label{eq:F-infty}
\mathcal{F}^{\mathcal{M}}_{\infty}(A, B, C) \ll_{\epsilon} |B|^{2\beta+\frac{6(1-\sigma)}{2-\sigma}} CR^{\alpha-1} (|B|C)^{\epsilon}
\end{equation}
provided $R \geq |B|^{4\theta}$. On the other hand, an unconditional bound in Lemma~\ref{lem:substitute} yields
\begin{equation}\label{eq:complement}
\mathcal{F}^{\overline{\mathcal{M}}}_{\infty}(A, B, C) \ll_{\epsilon} B^{2(2\delta-1)} CR^{-\frac{1}{2}} (|B|C)^{\epsilon}.
\end{equation}
Altogether, we balance~\eqref{eq:F-0} and~\eqref{eq:F-infty} with $R = |B|^{\frac{4(3+2\beta-(3+\beta)\sigma)}{(3-2\alpha)(2-\sigma)}}$, deducing for $\theta \leq \frac{3+2\beta-(3+\beta)\sigma}{(3-2\alpha)(2-\sigma)}$ that
\begin{equation}\label{eq:F-final}
\mathcal{F}(A, B, C) = \lambda AC+O_{\epsilon}((|B|^{\frac{2(3+2\beta-(3+\beta)\sigma)}{(3-2\alpha)(2-\sigma)}} C+|B|^{2(2\delta-1)-\frac{2(3+2\beta-(3+\beta)\sigma)}{(3-2\alpha)(2-\sigma)}} C+AC^{\frac{1}{2}})(|B|C)^{\epsilon})
\end{equation}
for an absolute and effectively computable constant $\lambda > 0$. It is important that our choice~of $R$ pivots upon the term corresponding to exceptional $a \in \mathcal{M}$ where Lemma~\ref{lem:Heath-Brown} plays a~crucial role. If we resolve the optimisation problem by means of~\eqref{eq:complement}, then we would recover~\eqref{eq:Sound-Young}. Comparing~\eqref{eq:trivial} and~\eqref{eq:F-final} for $|B| = A^{2}$ and $C = A^{\frac{1}{2}}$ determines the leading coefficient~$\lambda = \frac{6}{\pi^{2}}$. The proof of Lemma~\ref{lem:3} is thus complete.
\end{proof}

\begin{remark}
We may substitute the values of $\alpha$ and $\beta$ furnished by~\eqref{eq:alpha-beta} into Lemma~\ref{lem:3}, but we relegate such pursuits to Section~\ref{sect:proof} for aesthetic merit of including these parameters in the resulting bound. Furthermore, this would facilitate a straightforward enhancement~of~our results once Lemma~\ref{lem:2} allows for any power-saving improvement in the future.
\end{remark}

\section{Proof of Theorem~\ref{main}}\label{sect:proof}
We now embark on the proof of Theorem~\ref{main} via the techniques discussed above. To better clarify where we gain an additional saving from the ideas of Iwaniec~\cite{Iwaniec1984} and Cai~\cite{Cai2002}, we now intend to imitate the latter work with a chief emphasis on requisite adjustments on the arithmetic side of the Kloosterman summation formula in~\cite[Lemma on Page~323]{Kuznetsov1978}\footnote{This name stems from the fact that it expresses sums of Kloosterman sums weighted by a test function~$\varphi$ in terms of sums over automorphic forms weighted by integral transforms thereof, thought of as an inverse version of the Kuznetsov formula. Note that there is no delta term in the Kloosterman summation formula.}. We begin with an auxiliary lemma on strong bounds for the spectral~exponential sum.
\begin{lemma}\label{lem:extra}
Let $T, X \geq 1$. Suppose that~\eqref{eq:sums-of-quadratic-characters} holds for some $\frac{1}{2} \leq \alpha \leq 1$ and $0 \leq \beta \leq \frac{1}{2}$. Then we have for some fixed $\frac{5}{8} \leq \delta < \frac{3}{4}$ and for any $\frac{1}{2} \leq \sigma \leq 1$, $\theta \leq \frac{3+2\beta-(3+\beta)\sigma}{(3-2\alpha)(2-\sigma)}$, and $\epsilon > 0$ that
\begin{equation}\label{eq:extra}
\sum_{|t_{j}| \leq T} X^{it_{j}} \ll_{\epsilon} TX^{\frac{3+2\beta-(3+\beta)\sigma}{(3-2\alpha)(2-\sigma)}+\epsilon}+TX^{2\delta-1-\frac{3+2\beta-(3+\beta)\sigma}{(3-2\alpha)(2-\sigma)}+\epsilon}+T^{\frac{3}{2}}(\log T)^{2},
\end{equation}
where the sum runs through Laplace eigenvalues $\lambda_{j} = \frac{1}{4}+t_{j}^{2}$ for the modular surface counted with multiplicity, and the notation $\sum_{|t_{j}| \leq T}$ denotes that it is symmetrised by including the contributions of both $t_{j}$ and $-t_{j}$ in the summand.
\end{lemma}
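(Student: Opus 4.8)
The plan is to follow, essentially line by line, the treatment of the spectral exponential sum in Iwaniec~\cite{Iwaniec1984} and Cai~\cite{Cai2002}, with the single structural change that their arithmetic input --- Iwaniec's mean value estimate~\cite[Theorem~3]{Iwaniec1984} and Cai's refinement~\cite[Lemma~7]{Cai2002} --- is replaced everywhere by the sharper Lemma~\ref{lem:3}. First I would apply the Kloosterman summation formula~\cite[Lemma on Page~323]{Kuznetsov1978} with equal arguments $m=n$, prescribing on the spectral side a function $h(t)$ that mimics $X^{it}$ on the dyadic window $|t|\asymp T$ and decays rapidly outside it. Since the formula carries the harmonic weights $|\rho_{j}(n)|^{2}/\cosh(\pi t_{j})$, I would dispose of them by the standard device of Iwaniec and Cai, namely by attaching an auxiliary smooth average over $n\asymp N$ and invoking the Rankin--Selberg asymptotics for $\sum_{n\le N}|\rho_{j}(n)|^{2}$ together with the Hoffstein--Lockhart lower bound for $L(1,\mathrm{sym}^{2}u_{j})$~\cite{HoffsteinLockhart1994} --- as exploited by Luo--Sarnak~\cite{LuoSarnak1995} --- so that the spectral side effectively reduces to $\sum_{|t_{j}|\le T}X^{it_{j}}$ at the cost of a factor $X^{\epsilon}$, the Eisenstein contribution being of lower order. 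The geometric side is then a sum of Kloosterman sums $\sum_{c} c^{-1}S(n,n;c)\,\check{h}(4\pi n/c)$ over the range of $c$ fixed by the localisation, and with no diagonal term present.

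Next I would open each Kloosterman sum via~\eqref{eq:rho} as $\sum_{a \tpmod{c}}\rho(c,a)\,e(an/c)$ and execute the sum over $n$ by Poisson summation, equivalently by stationary phase against the oscillation of $\check{h}$. The stationary phase confines $a$ to a short interval whose offset is of size $\asymp X^{1/2}$; after a dyadic decomposition of the surviving sum over $c$, the geometric side is a bounded combination of the mean values $\mathcal{F}(A,B,C)$ with $|B|\asymp X^{1/2}$ and with $A$ and $C$ explicit functions of $X$, $T$, and the free length $N$. At this point Lemma~\ref{lem:3} applies with the given $\alpha,\beta$. The leading term $\tfrac{6}{\pi^{2}}AC$ of Lemma~\ref{lem:3}, once the oscillatory weights are restored and the dyadic decomposition is summed, is absorbed into the right-hand side of~\eqref{eq:extra} by the bookkeeping of Iwaniec~\cite{Iwaniec1984} and Cai~\cite{Cai2002}; and, writing $\eta\coloneqq\tfrac{3+2\beta-(3+\beta)\sigma}{(3-2\alpha)(2-\sigma)}$, the three error terms of~\eqref{eq:F-final} --- namely $|B|^{2\eta}C$, $|B|^{2(2\delta-1)-2\eta}C$, and $AC^{1/2}$ --- become, after normalisation, precisely $TX^{\eta+\epsilon}$, $TX^{2\delta-1-\eta+\epsilon}$, and $T^{3/2}(\log T)^{2}$ respectively; the last is the familiar arithmetic-free contribution, produced by the Weil bound for Kloosterman sums on the residual ranges of $c$ as in Cai~\cite{Cai2002}. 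Finally, the threshold $x\ge|D|^{2\theta}$ of Lemma~\ref{lem:2} propagates through Lemma~\ref{lem:3} as the condition $R\ge|B|^{4\theta}$, hence, for the optimal choice $R=|B|^{4\eta}$, as $\theta\le\eta$; this is precisely the constraint $\theta\le\tfrac{3+2\beta-(3+\beta)\sigma}{(3-2\alpha)(2-\sigma)}$ recorded in the statement, which moreover makes the hypotheses of Lemma~\ref{lem:3} satisfiable.

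The part I expect to be the main obstacle is the harmonic-analytic bookkeeping of the second paragraph: one must pin down with precision how $(A,B,C)$ depend on $(X,T,N)$ through the stationary phase, verify that the leading term of $\mathcal{F}(A,B,C)$ does not leak out as a spurious main term in $\sum_{|t_{j}|\le T}X^{it_{j}}$, optimise the free length $N$ jointly and consistently with the parameter $R$ internal to Lemma~\ref{lem:3}, and confirm that the disposal of the harmonic weights and the truncation of $h$ cost no more than $X^{\epsilon}$. Conceptually none of this is new --- these are verbatim the manoeuvres of Iwaniec~\cite{Iwaniec1984} and Cai~\cite{Cai2002}, only with the Burgess-type mean value estimate swapped for Lemma~\ref{lem:3} --- but the substitution has to be carried through the optimisation coherently, recording along the way the harmless restriction $A\le|B|$, i.e.\ $T\ll X^{1/2}$, inherited from Lemma~\ref{lem:3}, after which collecting the three surviving contributions delivers~\eqref{eq:extra}.
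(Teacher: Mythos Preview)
Your proposal is correct and follows essentially the same route as the paper: both substitute Lemma~\ref{lem:3} for Cai's~\cite[Lemma~7]{Cai2002} in the otherwise unchanged Iwaniec--Cai machinery, with parameters $A=CN^{-1+\epsilon}$, $|B|=X^{1/2}$, $X^{1/2}\ll C\ll X$, and final optimisation $C=X^{1/2}$, $N=TX^{\epsilon}$, followed by the Luo--Sarnak Fourier device to remove the smoothing. Two minor corrections: the paper stresses that, unlike Cai's version, Lemma~\ref{lem:extra} carries \emph{no} restriction on $T$ (so your claimed constraint $T\ll X^{1/2}$ from $A\le|B|$ is neither needed nor correct with the optimal $C$), and the $T^{3/2}(\log T)^{2}$ term here stems from the $AC^{1/2}$ error of Lemma~\ref{lem:3} rather than from a separate Weil-bound estimate on residual ranges.
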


\begin{proof}
The proof strategy goes \textit{mutatis mutandis} up to the second display on~\cite[Page~70]{Cai2002}, which in our case reads (via Lemma~\ref{lem:3} in place of~\cite[Lemma~7]{Cai2002})
\begin{multline}\label{eq:Cai}
\mathcal{F}_{x}(A, B, C) \coloneqq \sum_{C < c \leq 2C} \sum_{|B-a| \leq A} \rho(c, a) e((B-a)x)\\
 = \frac{6C \sin(2\pi Ax)}{\pi^{3} x}+O_{\epsilon}((1+Ax)(|B|^{\frac{2(3+2\beta-(3+\beta)\sigma)}{(3-2\alpha)(2-\sigma)}} C+|B|^{2(2\delta-1)-\frac{2(3+2\beta-(3+\beta)\sigma)}{(3-2\alpha)(2-\sigma)}} C+AC^{\frac{1}{2}})(|B|C)^{\epsilon}),
\end{multline}
where $A = CN^{-1+\epsilon}$, $|B| = X^{\frac{1}{2}}$, and $X^{\frac{1}{2}} \ll C \ll X$. Integrating over $x \in [\frac{N}{2C}, \frac{2N}{C}]$ trivially shows that the contribution of the coefficient $1+Ax$ in~\eqref{eq:Cai} is negligible, and the counterpart of~\cite[Equation~(6.7)]{Cai2002} reads
\begin{multline}\label{eq:6.7}
N^{2} C^{-2} (|B|^{\frac{2(3+2\beta-(3+\beta)\sigma)}{(3-2\alpha)(2-\sigma)}} C+|B|^{2(2\delta-1)-\frac{2(3+2\beta-(3+\beta)\sigma)}{(3-2\alpha)(2-\sigma)}} C+AC^{\frac{1}{2}})(|B|C)^{\epsilon}\\
\ll_{\epsilon} C^{-1} N^{2} X^{\frac{3+2\beta-(3+\beta)\sigma}{(3-2\alpha)(2-\sigma)}+\epsilon}+C^{-1} N^{2} X^{2\delta-1-\frac{3+2\beta-(3+\beta)\sigma}{(3-2\alpha)(2-\sigma)}+\epsilon},
\end{multline}
where the contribution of the last term in~\eqref{eq:Cai} is smaller than the right-hand side of~\eqref{eq:6.7}. Likewise, the counterpart of~\cite[Equation~(7.1)]{Cai2002} reads
\begin{equation*}
\sum_{t_{j} > 0} X^{it_{j}} e^{-\frac{t_{j}}{T}} \ll_{\epsilon} TX^{\frac{3+2\beta-(3+\beta)\sigma}{(3-2\alpha)(2-\sigma)}+\epsilon}+TX^{2\delta-1-\frac{3+2\beta-(3+\beta)\sigma}{(3-2\alpha)(2-\sigma)}+\epsilon}+T^{\frac{3}{2}}(\log T)^{2},
\end{equation*}
where we optimise $C = X^{\frac{1}{2}}$ and $N = TX^{\epsilon}$. Then the standard Fourier-theoretic technique of Luo--Sarnak~\cite[Pages~235--236]{LuoSarnak1995} verifies the claim. The proof of Lemma~\ref{lem:extra} is complete.
\end{proof}

Cai~\cite[Section~7]{Cai2002} then proceeds to an interpolation with the Luo--Sarnak bound~\cite[Equation~(58)]{LuoSarnak1995}. Nonetheless, this manoeuvre is unnecessary in our case since all the powers of $T$ on the right-hand side of~\eqref{eq:extra} are at least $1$, whose purpose is to enable the subsequent application of integration by parts (or partial summation). Note that Cai~\cite[Section~7]{Cai2002} omitted the process of eliminating a restriction in the last display on~\cite[Page~67]{Cai2002}; hence his deduction as stated remains inexact in a narrow sense. Unlike his crude line of argument, Lemma~\ref{lem:extra} imposes no restriction on $T$. Moreover, there is a salient difference between the test functions in~\cite[Equation~(29)]{Iwaniec1984} and Cai~\cite[Section~5]{Cai2002}. The former addresses the spectral exponential sum in the explicit formula~\cite[Lemma~1]{Iwaniec1984} as it is, while the latter addresses an unweighted version to adapt the machinery of Luo--Sarnak~\cite[Appendix]{LuoSarnak1995}.

We now make use of one of the key ideas of Soundararajan--Young~\cite{SoundararajanYoung2013}. Let $Y \in [\sqrt{x}, x]$ be a parameter to be chosen later. Let $k: (0, \infty) \to [0, \infty)$ be a smooth function with total integral $1$. If $k(u)$ is supported on $[Y, 2Y]$ and satisfies~\cite[Equation~(9)]{SoundararajanYoung2013}, then it follows from~\cite[Equation~(17)]{SoundararajanYoung2013} and the last display in the corrections thereof that
\begin{equation}\label{eq:Y}
\Psi_{\Gamma}(x) = x+E(x; k)+O_{\epsilon}(Y^{\frac{1}{2}} x^{\frac{1}{4}+\frac{\theta}{2}+\epsilon}+x^{\frac{1}{2}+\frac{\theta}{2}+\epsilon}+Yx^{-1}),
\end{equation}
where~\cite[Equation~(20)]{SoundararajanYoung2013} yields
\begin{equation*}
E(x; k) \coloneqq \int_{0}^{\infty} (\Psi_{\Gamma}(x+u)-x-u) k(u) du = \sum_{|t_{j}| \leq \frac{x^{1+\epsilon}}{Y}} \frac{1}{s_{j}} \int_{Y}^{2Y} (x+u)^{s_{j}} k(u) du+O_{\epsilon}(x^{\frac{1}{2}+\epsilon}).
\end{equation*}
Estimating the integral on the right-hand side via the supremum of the integrand, we obtain
\begin{equation}\label{eq:sup}
\sum_{|t_{j}| \leq \frac{x^{1+\epsilon}}{Y}} \frac{1}{s_{j}} \int_{Y}^{2Y} (x+u)^{s_{j}} k(u) du 
\ll x^{\frac{1}{2}} \sup_{Y \leq u \leq 2Y} \left|\sum_{|t_{j}| \leq \frac{x^{1+\epsilon}}{Y}} \frac{(x+u)^{it_{j}}}{\frac{1}{2}+it_{j}} \right|.
\end{equation}
It remains to handle the spectral exponential sum inside the absolute value. To this end,~we denote by $\mathcal{S}(X, T)$ the left-hand side of~\eqref{eq:extra}. If we abbreviate $T^{\ast} = \frac{x^{1+\epsilon}}{Y}$ and $X = x+u$ for a given $u \in [Y, 2Y]$, then by integration by parts (or partial summation)
\begin{equation*}
\sum_{|t_{j}| \leq \frac{x^{1+\epsilon}}{Y}} \frac{(x+u)^{it_{j}}}{\frac{1}{2}+it_{j}} = \int_{1}^{T^{\ast}} \frac{d\mathcal{S}(T, X)}{\frac{1}{2}+iT} = \frac{\mathcal{S}(T^{\ast}, X)}{\frac{1}{2}+iT^{\ast}}+i \int_{1}^{T^{\ast}} \frac{\mathcal{S}(T, X)}{(\frac{1}{2}+iT)^{2}} dT.
\end{equation*}
Applying Lemma~\ref{lem:extra} and substituting the definitions of $T^{\ast}$ and $X$, we deduce
\begin{equation}\label{eq:substitute}
\ll_{\epsilon} x^{\frac{3+2\beta-(3+\beta)\sigma}{(3-2\alpha)(2-\sigma)}+\epsilon}+x^{2\delta-1-\frac{3+2\beta-(3+\beta)\sigma}{(3-2\alpha)(2-\sigma)}+\epsilon}+x^{\frac{1}{2}+\epsilon} Y^{-\frac{1}{2}}.
\end{equation}
Combining~\eqref{eq:sup} and~\eqref{eq:substitute} thus yields
\begin{equation}\label{eq:E}
E(x; k) \ll_{\epsilon} x^{\frac{1}{2}+\frac{3+2\beta-(3+\beta)\sigma}{(3-2\alpha)(2-\sigma)}+\epsilon}+x^{2\delta-\frac{1}{2}-\frac{3+2\beta-(3+\beta)\sigma}{(3-2\alpha)(2-\sigma)}+\epsilon}+x^{1+\epsilon} Y^{-\frac{1}{2}}.
\end{equation}
Note that the first term dominates the last term whenever $Y \gg x^{1-\frac{2(3+2\beta-(3+\beta)\sigma)}{(3-2\alpha)(2-\sigma)}+\epsilon}$. It is now convenient to minimise $Y$ so that
\begin{equation*}
Y = x^{1-\frac{2(3+2\beta-(3+\beta)\sigma)}{(3-2\alpha)(2-\sigma)}+\epsilon}.
\end{equation*}
Hence, we conclude from~\eqref{eq:Y} and~\eqref{eq:E} that
\begin{equation}\label{eq:altogether}
\mathcal{E}_{\Gamma}(x) \ll_{\epsilon} x^{\frac{1}{2}+\frac{3+2\beta-(3+\beta)\sigma}{(3-2\alpha)(2-\sigma)}+\epsilon}+x^{2\delta-\frac{1}{2}-\frac{3+2\beta-(3+\beta)\sigma}{(3-2\alpha)(2-\sigma)}+\epsilon}+x^{\frac{1}{2}(1-\frac{2(3+2\beta-(3+\beta)\sigma)}{(3-2\alpha)(2-\sigma)})+\frac{1}{4}+\frac{\theta}{2}+\epsilon}+x^{\frac{1}{2}+\frac{\theta}{2}+\epsilon}.
\end{equation}
By~\eqref{eq:alpha-beta}, the optimal choice of $\sigma$ (based on the first and third terms) boils down to
\begin{equation*}
0.90909 \cdots = \frac{10}{11} \leq \sigma = \frac{20+30\theta-4\theta^{2}}{22+27\theta-2\theta^{2}} \leq \frac{16}{17} = 0.94117 \cdots, \qquad 0 \leq \theta \leq \frac{1}{6},
\end{equation*}
which obeys the assumption in Lemma~\ref{lem:extra}, justifying its validity. Here the upper bound~for $\theta$ stems from the restriction $\theta \leq \frac{3+2\beta-(3+\beta)\sigma}{(3-2\alpha)(2-\sigma)}$. Furthermore, by the definition~\eqref{eq:delta} of $\frac{5}{8} \leq \delta < \frac{3}{4}$, one must equate
\begin{equation}\label{eq:final}
\delta = \max \left(\frac{1}{2}+\frac{3+2\beta-(3+\beta)\sigma}{(3-2\alpha)(2-\sigma)}, 2\delta-\frac{1}{2}-\frac{3+2\beta-(3+\beta)\sigma}{(3-2\alpha)(2-\sigma)} \right) \quad \Longleftrightarrow \quad \delta = \frac{5}{8}+\frac{\theta}{4}.
\end{equation}
Note that the exponent $\frac{1}{2}+\frac{\theta}{2}$ in the last term on the right-hand side of~\eqref{eq:altogether} is always smaller than $\delta = \frac{5}{8}+\frac{\theta}{4}$ because $0 \leq \theta \leq \frac{1}{6}$. Putting everything together leads to the desired claim
\begin{equation*}
\mathcal{E}_{\Gamma}(x) \ll_{\epsilon} x^{\frac{5}{8}+\frac{\theta}{4}+\epsilon}.
\end{equation*}
The proof of Theorem~\ref{main} is now complete.


\begin{thebibliography}{BBHM19}

\bibitem[Bar62]{Barban1962}
Mark~Borisovich Barban,
  \emph{\href{https://www.mathnet.ru/eng/im/v26/i4/p573}{Linnik's ``Great
  Sieve'' and a Limit Theorem for the Class Number of Ideals of an Imaginary
  Quadratic Field}}, Izvestiya Akademii Nauk SSSR. Seriya Matematicheskaya
  \textbf{26} (1962), 573--580. \MR{151441}

\bibitem[Bar66]{Barban1966}
\bysame, \emph{\href{https://doi.org/10.1070/RM1966v021n01ABEH004146}{The
  ``Large Sieve'' Method and Its Application to Number Theory}}, Akademija Nauk
  SSSR i Moskovskoe Matemati\v{c}eskoe Ob\v{s}\v{c}estvo. Uspehi
  Matemati\v{c}eskih Nauk \textbf{21} (1966), no.~1, 51--102, Translation in
  Russian Mathematical Surveys \textbf{21} (1966), no.~1, 49--103. \MR{199171}

\bibitem[BBCL22]{BalogBiroCherubiniLaaksonen2022}
Antal Balog, Andr\'{a}s Bir\'{o}, Giacomo Cherubini, and Niko Petter~Johannes
  Laaksonen, \emph{\href{https://doi.org/10.1093/imrn/rnaa128}{Bykovskii-Type
  Theorem for the {P}icard Manifold}}, International Mathematics Research
  Notices (2022), no.~3, 1893--1921. \MR{4373228}

\bibitem[BBHM19]{BalogBiroHarcosMaga2019}
Antal Balog, Andr\'{a}s Bir\'{o}, Gergely Harcos, and P\'{e}ter Maga,
  \emph{\href{https://doi.org/10.1016/j.jnt.2018.10.012}{The Prime Geodesic
  Theorem in Square Mean}}, Journal of Number Theory \textbf{198} (2019),
  239--249, Corrections:
  \url{https://users.renyi.hu/~gharcos/primegeodesic_errata.txt}. \MR{3912938}

\bibitem[BCC{\etalchar{+}}19]{BalkanovaChatzakosCherubiniFrolenkovLaaksonen2019}
Olga~Germanovna Balkanova, Dimitrios Chatzakos, Giacomo Cherubini,
  Dmitry~Andreevich Frolenkov, and Niko Petter~Johannes Laaksonen,
  \emph{\href{https://doi.org/10.1090/tran/7720}{Prime Geodesic Theorem in the
  {$3$}-Dimensional Hyperbolic Space}}, Transactions of the American
  Mathematical Society \textbf{372} (2019), no.~8, 5355--5374. \MR{4014279}

\bibitem[BF18]{BalkanovaFrolenkov2018}
Olga~Germanovna Balkanova and Dmitry~Andreevich Frolenkov,
  \emph{\href{https://doi.org/10.2140/ant.2018.12.35}{The Mean Value of
  Symmetric Square {$L$}-Functions}}, Algebra \& Number Theory \textbf{12}
  (2018), no.~1, 35--59. \MR{3781432}

\bibitem[BF19a]{BalkanovaFrolenkov2019}
\bysame, \emph{\href{https://doi.org/10.1093/qmath/hay060}{Sums of
  {K}loosterman Sums in the Prime Geodesic Theorem}}, The Quarterly Journal of
  Mathematics \textbf{70} (2019), no.~2, 649--674. \MR{3975527}

\bibitem[BF19b]{BalkanovaFrolenkov2019-3}
\bysame, \emph{\href{https://doi.org/10.4171/rmi/1106}{Convolution Formula for
  the Sums of Generalized {D}irichlet {$L$}-Functions}}, Revista Matem\'{a}tica
  Iberoamericana \textbf{35} (2019), no.~7, 1973--1995. \MR{4029789}

\bibitem[BF20]{BalkanovaFrolenkov2020}
\bysame, \emph{\href{https://doi.org/10.1016/j.aim.2020.107377}{Prime Geodesic
  Theorem for the {P}icard Manifold}}, Advances in Mathematics \textbf{375}
  (2020), 107377, 1--42. \MR{4142089}

\bibitem[BFR22]{BalkanovaFrolenkovRisager2022}
Olga~Germanovna Balkanova, Dmitry~Andreevich Frolenkov, and Morten~Skarsholm
  Risager, \emph{\href{https://doi.org/10.1017/s0305004121000384}{Prime
  Geodesics and Averages of the {Z}agier {$L$}-Series}}, Mathematical
  Proceedings of the Cambridge Philosophical Society \textbf{172} (2022),
  no.~3, 705--728. \MR{4416575}

\bibitem[BKS19]{BrowningKumaraswamySteiner2019}
Timothy~Daniel Browning, Viswanathan~Vinay Kumaraswamy, and Raphael~Sebastian
  Steiner, \emph{\href{https://doi.org/10.1093/imrn/rnx116}{Twisted {L}innik
  Implies Optimal Covering Exponent for {$S^{3}$}}}, International Mathematics
  Research Notices (2019), no.~1, 140--164. \MR{3897426}

\bibitem[Bur62]{Burgess1962}
David~Albert Burgess, \emph{\href{https://doi.org/10.1112/plms/s3-12.1.193}{On
  Character Sums and {$L$}-Series}}, Proceedings of the London Mathematical
  Society \textbf{12} (1962), 193--206. \MR{132733}

\bibitem[Bur63]{Burgess1963}
\bysame, \emph{\href{https://doi.org/10.1112/plms/s3-13.1.524}{On Character
  Sums and {$L$}-Series. {II}}}, Proceedings of the London Mathematical Society
  \textbf{13} (1963), 524--536. \MR{148626}

\bibitem[Byk94]{Bykovskii1994}
Viktor~Alekseevich Bykovski\u{\i},
  \emph{\href{https://doi.org/10.1007/BF02439199}{Density Theorems and the Mean
  Value of Arithmetic Functions on Short Intervals}}, Rossi\u{\i}skaya
  Akademiya Nauk. Sankt-Peterburgskoe Otdelenie. Matematicheski\u{\i} Institut
  im. V. A. Steklova. Zapiski Nauchnykh Seminarov (POMI) \textbf{212} (1994),
  56--70, 196, Translation in Journal of Mathematical Sciences (New York)
  \textbf{83} (1997), no.~6, 720--730. \MR{1332009}

\bibitem[Cai02]{Cai2002}
Yingchun Cai,
  \emph{\href{http://jtnb.cedram.org/item?id=JTNB_2002__14_1_59_0}{Prime
  Geodesic Theorem}}, Journal de Th\'{e}orie des Nombres de Bordeaux
  \textbf{14} (2002), no.~1, 59--72. \MR{1925990}

\bibitem[CCL22]{ChatzakosCherubiniLaaksonen2022}
Dimitrios Chatzakos, Giacomo Cherubini, and Niko Petter~Johannes Laaksonen,
  \emph{\href{https://doi.org/10.1007/s00209-021-02778-8}{Second Moment of the
  Prime Geodesic Theorem for {$\mathrm{PSL}(2, \mathbb{Z}[i])$}}},
  Mathematische Zeitschrift \textbf{300} (2022), no.~1, 791--806. \MR{4359543}

\bibitem[CG18]{CherubiniGuerreiro2018}
Giacomo Cherubini and Jo\~{a}o Guerreiro,
  \emph{\href{https://doi.org/10.2140/ant.2018.12.571}{Mean Square in the Prime
  Geodesic Theorem}}, Algebra \& Number Theory \textbf{12} (2018), no.~3,
  571--597. \MR{3815307}

\bibitem[CHK23]{ChatzakosHarcosKaneko2023}
Dimitrios Chatzakos, Gergely Harcos, and Ikuya Kaneko,
  \emph{\href{https://arxiv.org/abs/2309.04186}{The Prime Geodesic Theorem in
  Arithmetic Progressions}}, arXiv e-prints (2023), 11 pages.

\bibitem[CI00]{ConreyIwaniec2000}
John~Brian Conrey and Henryk Iwaniec,
  \emph{\href{https://doi.org/10.2307/121132}{The Cubic Moment of Central
  Values of Automorphic {$L$}-Functions}}, Annals of Mathematics \textbf{151}
  (2000), no.~3, 1175--1216. \MR{1779567}

\bibitem[Cor24]{Corrigan2024}
Chandler~C. Corrigan,
  \emph{\href{https://doi.org/10.1016/j.jnt.2023.09.008}{Mean Square Values of
  {D}irichlet {$L$}-Functions Associated to Fixed Order Characters}}, Journal
  of Number Theory \textbf{256} (2024), 8--22. \MR{4660644}

\bibitem[CWZ22]{CherubiniWuZabradi2022}
Giacomo Cherubini, Han Wu, and Gergely Z\'{a}br\'{a}di,
  \emph{\href{https://doi.org/10.1007/s00209-021-02808-5}{On
  {K}uznetsov-{B}ykovskii's Formula of Counting Prime Geodesics}},
  Mathematische Zeitschrift \textbf{300} (2022), no.~1, 881--928. \MR{4359547}

\bibitem[Dia99]{Diaconu1999}
Calin~Adrian Diaconu,
  \emph{\href{http://gateway.proquest.com/openurl?url_ver=Z39.88-2004\&rft_val_fmt=info:ofi/fmt:kev:mtx:dissertation\&res_dat=xri:pqdiss\&rft_dat=xri:pqdiss:9936644}{Applications
  of the Double {D}irichlet Series to Quartic Twists}}, {Ph.D.} thesis, Brown
  University, Providence, RI, USA, 1999, p.~55. \MR{2699336}

\bibitem[DM23]{DeverMilicevic2023}
Lindsay Dever and Djordje Mili\'{c}evi\'{c},
  \emph{\href{https://doi.org/10.1093/imrn/rnab048}{Ambient Prime Geodesic
  Theorems on Hyperbolic {$3$}-Manifolds}}, International Mathematics Research
  Notices (2023), no.~1, 588--635. \MR{4530117}

\bibitem[Gal80]{Gallagher1980}
Patrick~Ximenes Gallagher,
  \emph{\href{https://doi.org/10.4064/aa-37-1-339-343}{Some Consequences of the
  {R}iemann Hypothesis}}, Acta Arithmetica \textbf{37} (1980), 339--343.
  \MR{598886}

\bibitem[GH85]{GoldfeldHoffstein1985}
Dorian~Morris Goldfeld and Jeffrey~Ezra Hoffstein,
  \emph{\href{https://doi.org/10.1007/BF01388603}{Eisenstein Series of
  {$\frac{1}{2}$}-Integral Weight and the Mean Value of Real {D}irichlet
  {$L$}-Series}}, Inventiones mathematicae \textbf{80} (1985), no.~2, 185--208.
  \MR{788407}

\bibitem[HB78]{HeathBrown1978}
David~Rodney Heath-Brown,
  \emph{\href{https://doi.org/10.1007/BF01578069}{Hybrid Bounds for {D}irichlet
  {$L$}-Functions}}, Inventiones mathematicae \textbf{47} (1978), no.~2,
  149--170. \MR{485727}

\bibitem[HB79a]{HeathBrown1979-2}
\bysame, \emph{\href{https://doi.org/10.1112/jlms/s2-19.2.221}{Zero Density
  Estimates for the {R}iemann Zeta-Function and {D}irichlet {$L$}-Functions}},
  Journal of the London Mathematical Society \textbf{19} (1979), no.~2,
  221--232. \MR{533320}

\bibitem[HB79b]{HeathBrown1979-3}
\bysame, \emph{\href{https://doi.org/10.1112/jlms/s2-20.1.8}{A Large Values
  Estimate for {D}irichlet Polynomials}}, Journal of the London Mathematical
  Society \textbf{20} (1979), no.~1, 8--18. \MR{545197}

\bibitem[HB79c]{HeathBrown1979-4}
\bysame, \emph{\href{https://doi.org/10.4153/CJM-1979-024-0}{The Density of
  Zeros of {D}irichlet's {$L$}-Functions}}, Canadian Journal of Mathematics
  \textbf{31} (1979), no.~2, 231--240. \MR{528800}

\bibitem[HB80]{HeathBrown1980}
\bysame, \emph{\href{https://doi.org/10.1093/qmath/31.2.157}{Hybrid Bounds for
  {D}irichlet {$L$}-Functions. {II}}}, The Quarterly Journal of Mathematics
  \textbf{31} (1980), no.~122, 157--167. \MR{576334}

\bibitem[HB95]{HeathBrown1995}
\bysame, \emph{\href{https://doi.org/10.4064/aa-72-3-235-275}{A Mean Value
  Estimate for Real Character Sums}}, Acta Arithmetica \textbf{72} (1995),
  no.~3, 235--275. \MR{1347489}

\bibitem[Hej76]{Hejhal1976}
Dennis~Arnold Hejhal, \emph{\href{https://doi.org/10.1007/BFb0079608}{The
  {S}elberg Trace Formula for {$\mathrm{PSL}(2, \mathbf{R})$}. {V}olume
  {$1$}}}, Lecture Notes in Mathematics, vol. 548, Springer-Verlag, Berlin-New
  York, 1976. \MR{439755}

\bibitem[Hej83]{Hejhal1983}
\bysame, \emph{\href{https://doi.org/10.1007/BFb0061302}{The {S}elberg Trace
  Formula for {$\mathrm{PSL}(2, \mathbf{R})$}. {V}olume {$2$}}}, Lecture Notes
  in Mathematics, vol. 1001, Springer-Verlag, Berlin, 1983. \MR{711197}

\bibitem[HL94]{HoffsteinLockhart1994}
Jeffrey~Ezra Hoffstein and Paul~Thomas Lockhart,
  \emph{\href{https://doi.org/10.2307/2118543}{Coefficients of {M}aass Forms
  and the {S}iegel Zero}}, Annals of Mathematics \textbf{140} (1994), no.~1,
  161--181, With an Appendix by Dorian Morris Goldfeld, Jeffrey Ezra Hoffstein,
  and Daniel Bennett Lieman. \MR{1289494}

\bibitem[Hub59]{Huber1959}
Heinz Huber, \emph{\href{https://doi.org/10.1007/BF01369663}{Zur analytischen
  {T}heorie hyperbolischen {R}aumformen und {B}ewegungsgruppen}}, Mathematische
  Annalen \textbf{138} (1959), 1--26. \MR{109212}

\bibitem[Hub61a]{Huber1961}
\bysame, \emph{\href{https://doi.org/10.1007/BF01451031}{Zur analytischen
  {T}heorie hyperbolischer {R}aumformen und {B}ewegungsgruppen. {II}}},
  Mathematische Annalen \textbf{142} (1961), 385--398. \MR{126549}

\bibitem[Hub61b]{Huber1961-2}
\bysame, \emph{\href{https://doi.org/10.1007/BF01470758}{{Z}ur analytischen
  {T}heorie hyperbolischer {R}aumformen und {B}ewegungsgruppen. {II}.
  {N}achtrag zu Mathematische Annalen {\textbf{142}}, 385--398 (1961)}},
  Mathematische Annalen \textbf{143} (1961), 463--464. \MR{154980}

\bibitem[Hux76]{Huxley1976}
Martin~Neil Huxley, \emph{\href{https://doi.org/10.1112/jlms/s2-13.1.53}{An
  Imperfect Hybrid Zero-Density Theorem}}, Journal of the London Mathematical
  Society \textbf{13} (1976), no.~1, 53--56. \MR{399012}

\bibitem[IK04]{IwaniecKowalski2004}
Henryk Iwaniec and Emmanuel Kowalski,
  \emph{\href{https://doi.org/10.1090/coll/053}{Analytic Number Theory}},
  American Mathematical Society Colloquium Publications, vol.~53, American
  Mathematical Society, Providence, RI, 2004, Corrections:
  \url{https://people.math.ethz.ch/~kowalski/corrections-ant.pdf}. \MR{2061214}

\bibitem[IS85]{IwaniecSzmidt1985}
Henryk Iwaniec and Janusz Szmidt,
  \emph{\href{http://doi.org/10.4064/-17-1-317-331}{Density Theorems for
  Exceptional Eigenvalues of the {L}aplacian for Congruence Groups}},
  Elementary and Analytic Theory of Numbers. Proceedings of the Twentieth
  Semester Held in Warsaw, September 1--November 13, 1982 (Henryk Iwaniec,
  ed.), Banach Center Publications, vol.~17, Polish Scientific Publishers PWN,
  Warsaw, 1985, pp.~317--331. \MR{840481}

\bibitem[Iwa84a]{Iwaniec1984}
Henryk Iwaniec, \emph{\href{https://doi.org/10.1515/crll.1984.349.136}{Prime
  Geodesic Theorem}}, Journal f\"{u}r die reine und angewandte Mathematik
  \textbf{349} (1984), 136--159. \MR{743969}

\bibitem[Iwa84b]{Iwaniec1984-2}
\bysame, \emph{Non-holomorphic {M}odular {F}orms and {T}heir {A}pplications},
  Modular Forms. Papers from the Symposium Held at the University of Durham,
  Durham, June 30--July 10, 1983 (Robert~Alexander Rankin, ed.), Ellis Horwood
  Series in Mathematics and Its Applications: Statistics and Operational
  Research, Ellis Horwood Ltd., Chichester, 1984, pp.~157--196. \MR{803367}

\bibitem[Jut72]{Jutila1972}
Matti~Ilmari Jutila, \emph{A {D}ensity {E}stimate for {$L$}-{F}unctions with a
  {R}eal {C}haracter}, Annales Academiae Scientiarum Fennicae. Series A. I.
  Mathematica (1972), no.~508, 10. \MR{302581}

\bibitem[Jut75]{Jutila1975}
\bysame, \emph{\href{https://doi.org/10.4064/aa-27-1-191-198}{On Mean Values of
  {D}irichlet Polynomials with Real Characters}}, Acta Arithmetica \textbf{27}
  (1975), 191--198. \MR{406954}

\bibitem[Jut76]{Jutila1976}
\bysame, \emph{\href{http://eudml.org/doc/181977}{Recent Progress in the Theory
  of {$L$}-Functions}}, S\'{e}minaire de {T}h\'{e}orie des {N}ombres,
  1975--1976. Tenu \`{a} l'Universit\'{e} de Bordeaux I, Talence, ann\'{e}e
  universitaire 1975--1976, Centre National de Recherche Scientifique,
  Laboratoire de Th\'{e}orie des Nombres, Talence, 1976, p.~9. \MR{434990}

\bibitem[Jut77a]{Jutila1977}
\bysame, \emph{\href{https://doi.org/10.4064/aa-32-1-55-62}{Zero-Density
  Estimates for {$L$}-Functions}}, Acta Arithmetica \textbf{32} (1977), no.~1,
  55--62. \MR{429790}

\bibitem[Jut77b]{Jutila1977-2}
\bysame,
  \emph{\href{http://www.numdam.org/item/SDPP_1975-1976__17_1_A6_0/}{Recent
  Progress in the Theory of {$L$}-Functions}}, S\'{e}minaire
  {D}elange-{P}isot-{P}oitou, 17e ann\'{e}e: 1975/76. {T}h\'{e}orie des
  nombres: Fascicule {$1$}: Expos\'{e}s {$1$} \`{a} {$22$}, Secr\'{e}tariat
  Math\'{e}matique, Paris, 1977, p.~7. \MR{447148}

\bibitem[Kan20]{Kaneko2020}
Ikuya Kaneko, \emph{\href{https://doi.org/10.3792/pjaa.96.002}{The Second
  Moment for Counting Prime Geodesics}}, Proceedings of the Japan Academy,
  Series A, Mathematical Sciences \textbf{96} (2020), no.~1, 7--12.
  \MR{4047570}

\bibitem[Kan22]{Kaneko2022-2}
\bysame, \emph{\href{https://doi.org/10.2140/ant.2022.16.1845}{The Prime
  Geodesic Theorem for {$\mathrm{PSL}_{2}(\mathbb{Z}[i])$} and Spectral
  Exponential Sums}}, Algebra \& Number Theory \textbf{16} (2022), no.~8,
  1845--1887. \MR{4516195}

\bibitem[Kan23]{Kaneko2023}
\bysame, \emph{\href{https://doi.org/10.1007/s11139-022-00620-1}{Spectral
  Exponential Sums on Hyperbolic Surfaces}}, The Ramanujan Journal \textbf{60}
  (2023), no.~3, 837--846. \MR{4552664}

\bibitem[Kan24]{Kaneko2024-2}
\bysame, \emph{\href{https://arxiv.org/abs/2403.06626}{The Prime Geodesic
  Theorem for the Picard Manifold}}, arXiv e-prints (2024), 14 pages.

\bibitem[KK22]{KanekoKoyama2022}
Ikuya Kaneko and Shin-ya Koyama,
  \emph{\href{https://doi.org/10.1007/s11139-022-00550-y}{Euler Products of
  {S}elberg Zeta Functions in the Critical Strip}}, The Ramanujan Journal
  \textbf{59} (2022), no.~2, 437--458. \MR{4480294}

\bibitem[Koy98]{Koyama1998}
Shin-ya Koyama, \emph{\href{https://doi.org/10.1155/S1073792898000257}{Prime
  Geodesic Theorem for Arithmetic Compact Surfaces}}, International Mathematics
  Research Notices (1998), no.~8, 383--388. \MR{1628243}

\bibitem[Koy16]{Koyama2016}
\bysame, \emph{\href{https://doi.org/10.3792/pjaa.92.77}{Refinement of Prime
  Geodesic Theorem}}, Proceedings of the Japan Academy, Series A, Mathematical
  Sciences \textbf{92} (2016), no.~7, 77--81. \MR{3529088}

\bibitem[Kuz78a]{Kuznetsov1978-2}
Nikola\u{\i}~Vasil'evich Kuznetsov,
  \emph{\href{https://drive.google.com/file/d/1e0PzVLJaHqYAb2JlViH_UhDz6VxaIOK6/view}{Asimptoticheskie
  formuly dlya sobstvennykh znacheni\u{\i} operatora Laplasa na fundamental'
  no\u{\i} oblasti modulyarno\u{\i} gruppy}}, Doklady Akademii Nauk SSSR,
  Dal'nevosto\v{c}. Nau\v{c}n. Centr, Khabarovsk; Habarovsk. Kompleksn.
  Nau\v{c}n.-Issled. Inst., Khabarovsk (1978), 1--42, Presented at the Far
  Eastern Mathematical School. \MR{491510}

\bibitem[Kuz78b]{Kuznetsov1978}
\bysame,
  \emph{\href{https://drive.google.com/file/d/1uT165msBQYX7R-1fJksSyMENnKHmMcVW/view}{{Arifmeticheskaya
  forma formuly sleda Sel'berga i raspredelenie norm primitivnykh
  giperbolicheskikh klassov modulyarno\u{\i} gruppy}}}, Doklady Akademii Nauk
  SSSR, Dal'nevosto\v{c}. Nau\v{c}n. Centr, Khabarovsk; Habarovsk. Kompleksn.
  Nau\v{c}n.-Issled. Inst., Khabarovsk (1978), 1--44, Presented at the Far
  Eastern Mathematical School. \MR{491509}

\bibitem[LRS95]{LuoRudnickSarnak1995}
Wenzhi Luo, Zeev Rudnick, and Peter~Clive Sarnak,
  \emph{\href{https://doi.org/10.1007/BF01895672}{On {S}elberg's Eigenvalue
  Conjecture}}, Geometric and Functional Analysis \textbf{5} (1995), no.~2,
  387--401. \MR{1334872}

\bibitem[LS95]{LuoSarnak1995}
Wenzhi Luo and Peter~Clive Sarnak,
  \emph{\href{https://doi.org/10.1007/BF02699377}{Quantum Ergodicity of
  Eigenfunctions on {$\mathrm{PSL}_{2}(\mathbf{Z}) \backslash
  \mathbf{H}^{2}$}}}, Institut des Hautes {\'{E}}tudes Scientifiques.
  Publications Math\'{e}matiques (1995), no.~81, 207--237. \MR{1361757}

\bibitem[LS04]{LiSarnak2004}
Xiaoqing Li and Peter~Clive Sarnak,
  \emph{\href{https://publications.ias.edu/sites/default/files/SarnakNumberPaper04.pdf}{Number
  Variance for {$SL(2, \mathbb{Z}) \backslash \mathbb{H}$}}}, preprint (2004),
  35 pages.

\bibitem[Mon71]{Montgomery1971}
Hugh~Lowell Montgomery, \emph{\href{https://doi.org/10.1007/BFb0060851}{Topics
  in Multiplicative Number Theory}}, Lecture Notes in Mathematics, vol. 227,
  Springer-Verlag, Berlin-New York, 1971. \MR{337847}

\bibitem[PR17]{PetridisRisager2017}
Yiannis~Nicolaos Petridis and Morten~Skarsholm Risager,
  \emph{\href{https://doi.org/10.1007/s00209-016-1749-z}{Local Average in
  Hyperbolic Lattice Point Counting, with an Appendix by Niko Petter Johannes
  Laaksonen}}, Mathematische Zeitschrift \textbf{285} (2017), no.~3--4,
  1319--1344. \MR{3623751}

\bibitem[PY20]{PetrowYoung2020}
Ian Petrow and Matthew~Patrick Young,
  \emph{\href{https://doi.org/10.4007/annals.2020.192.2.3}{The {W}eyl Bound for
  {D}irichlet {$L$}-Functions of Cube-Free Conductor}}, Annals of Mathematics
  \textbf{192} (2020), no.~2, 437--486. \MR{4151081}

\bibitem[PY23]{PetrowYoung2023}
\bysame, \emph{\href{https://doi.org/10.1215/00127094-2022-0069}{The Fourth
  Moment of {D}irichlet {L}-Functions Along a Coset and the {W}eyl Bound}},
  Duke Mathematical Journal \textbf{172} (2023), no.~10, 1879--1960.
  \MR{4624371}

\bibitem[Ran77]{Randol1977}
Burton~Swank Randol, \emph{\href{https://doi.org/10.2307/1997834}{On the
  Asymptotic Distribution of Closed Geodesics on Compact {R}iemann Surfaces}},
  Transactions of the American Mathematical Society \textbf{233} (1977),
  241--247. \MR{482582}

\bibitem[Sar80]{Sarnak1980}
Peter~Clive Sarnak,
  \emph{\href{http://gateway.proquest.com/openurl?url_ver=Z39.88-2004&rft_val_fmt=info:ofi/fmt:kev:mtx:dissertation\&res_dat=xri:pqdiss\&rft_dat=xri:pqdiss:8103552}{Prime
  Geodesic Theorems}}, {Ph.D.} thesis, Stanford University, Stanford, CA, USA,
  1980, p.~111. \MR{2630950}

\bibitem[Sar82]{Sarnak1982}
\bysame, \emph{\href{https://doi.org/10.1016/0022-314X(82)90028-2}{Class
  Numbers of Indefinite Binary Quadratic Forms}}, Journal of Number Theory
  \textbf{15} (1982), no.~2, 229--247, Corrections:
  \url{https://doi.org/10.1016/0022-314X(83)90046-X}. \MR{675187}

\bibitem[Sar85]{Sarnak1985}
\bysame, \emph{\href{https://doi.org/10.1016/0022-314X(85)90060-5}{Class
  Numbers of Indefinite Binary Quadratic Forms. {II}}}, Journal of Number
  Theory \textbf{21} (1985), no.~3, 333--346. \MR{814010}

\bibitem[Sch76]{Schmidt1976}
Wolfgang~M. Schmidt, \emph{\href{https://doi.org/10.1007/BFb0080437}{Equations
  over Finite Fields. {A}n Elementary Approach}}, Lecture Notes in Mathematics,
  vol. 536, Springer-Verlag, Berlin-New York, 1976. \MR{429733}

\bibitem[Sel56]{Selberg1956}
Atle Selberg,
  \emph{\href{https://www.informaticsjournals.com/index.php/jims/article/view/16985}{Harmonic
  Analysis and Discontinuous Groups in Weakly Symmetric {R}iemannian Spaces
  with Applications to {D}irichlet Series}}, The Journal of the Indian
  Mathematical Society \textbf{20} (1956), 47--87. \MR{88511}

\bibitem[Sel14]{Selberg2014}
\bysame, \emph{\href{https://link.springer.com/book/9783642410215}{Collected
  Papers. {I}}}, Springer Collected Works in Mathematics, Springer-Verlag,
  Heidelberg, 2014, With a Foreword by Komaravolu S. Chandrasekharan, Reprint
  of the 1989 Edition [MR1117906]. \MR{3287209}

\bibitem[Sie56]{Siegel1956}
Carl~Ludwig Siegel, \emph{\href{https://doi.org/10.1007/BF01187948}{Die
  {F}unktionalgleichungen einiger {D}irichletscher {R}eihen}}, Mathematische
  Zeitschrift \textbf{63} (1956), 363--373. \MR{74533}

\bibitem[SY13]{SoundararajanYoung2013}
Kannan Soundararajan and Matthew~Patrick Young,
  \emph{\href{https://doi.org/10.1515/crelle.2012.002}{The Prime Geodesic
  Theorem}}, Journal f\"{u}r die reine und angewandte Mathematik \textbf{676}
  (2013), 105--120, Corrections:
  \url{https://www.math.tamu.edu/\%7Emyoung/PrimeGeodesicCorrections.pdf}.
  \MR{3028757}

\bibitem[Ven90]{Venkov1990}
Aleksei~Borisovich Venkov,
  \emph{\href{https://doi.org/10.1007/978-94-009-1892-4}{Spectral Theory of
  Automorphic Functions and Its Applications}}, Mathematics and Its
  Applications (Soviet Series), vol.~51, Kluwer Academic Publishers Group,
  Dordrecht, 1990, Translated from the Russian by Natal'ya Borisovna
  Lebedinskaya. \MR{1135112}

\bibitem[Wei48]{Weil1948}
Andr\'{e} Weil, \emph{\href{https://doi.org/10.1073/pnas.34.5.204}{On Some
  Exponential Sums}}, Proceedings of the National Academy of Sciences of the
  United States of America \textbf{34} (1948), 204--207. \MR{27006}

\bibitem[You17]{Young2017}
Matthew~Patrick Young, \emph{\href{https://doi.org/10.4171/JEMS/699}{Weyl-Type
  Hybrid Subconvexity Bounds for Twisted {$L$}-Functions and {H}eegner Points
  on Shrinking Sets}}, Journal of the European Mathematical Society \textbf{19}
  (2017), no.~5, 1545--1576. \MR{3635360}

\bibitem[Zag77]{Zagier1977}
Don~Bernard Zagier, \emph{\href{https://doi.org/10.1007/BFb0065299}{Modular
  Forms Whose {F}ourier Coefficients Involve Zeta-Functions of Quadratic
  Fields}}, \href{https://doi.org/10.1007/BFb0065293}{Modular Functions of One
  Variable, {VI}. Proceedings of the Second International Conference, Held at
  the University of Bonn, Bonn, July 2--14, 1976} (Jean-Pierre Serre and
  Don~Bernard Zagier, eds.), Lecture Notes in Mathematics, vol. 627,
  Springer-Verlag, Berlin-New York, 1977, pp.~105--169. \MR{485703}

\end{thebibliography}

\newcommand{\etalchar}[1]{$^{#1}$}
\providecommand{\bysame}{\leavevmode\hbox to3em{\hrulefill}\thinspace}
\providecommand{\MR}{\relax\ifhmode\unskip\space\fi MR }
\providecommand{\MRhref}[2]{%
  \href{http://www.ams.org/mathscinet-getitem?mr=#1}{#2}
}
\providecommand{\href}[2]{#2}

\end{document}